\newtheorem{theorem}{Theorem}[section]
\newtheorem{lemma}[theorem]{Lemma}
\theoremstyle{definition}
\newtheorem{definition}[theorem]{Definition}
\theoremstyle{remark}
\newtheorem{remark}[theorem]{Remark}
\newtheorem{proposition}[theorem]{Proposition}
\newtheorem{corollary}[theorem]{Corollary}
\numberwithin{equation}{section}
\begin{document}

% \title[short text for running head]{full title}
\title[Pseudofinite groups]{Pseudofinite groups with NIP theory and definability in finite simple groups}

%    Only \author and \address are required; other information is
%    optional.  Remove any unused author tags.

%    author one information
% \author[short version for running head]{name for top of paper}
\author{Dugald Macpherson}
\address{School of Mathematics\\ University of Leeds\\Leeds LS2
9JT\\UK}
%\curraddr{}
\email{h.d.macpherson@leeds.ac.uk}
\thanks{Research partially supported by EPSRC grant EP/H00677X/1}

%    author two information
\author{Katrin Tent}
\address{Mathematisches Institut\\ 
Universit\"at M\"unster\\ 
Einsteinstrasse 62\\
48149 M\"unster\\ 
Germany}
%\curraddr{}
\email{tent@math.uni-muenster.de}
\thanks{Research partially supported by SFB 878}
\dedicatory{For R\"udiger G\"obel, in celebration of his seventieth birthday.}

\subjclass[2000]{Primary 03C60; Secondary 20D06}
%    The 2010 edition of the Mathematics Subject Classification is
%    now available.  If you are citing a classification from the
%    new scheme, use the following input coding instead.
%\subjclass[2010]{Primary }

\date{July 29 2011, and, in revised form, February 15 2012}
\keywords{pseudofinite group, NIP theory, word map}

\begin{abstract}
We show that any pseudofinite group with NIP  theory and with a finite upper bound on the length of chains of centralisers is soluble-by-finite. In particular, any NIP rosy pseudofinite group is soluble-by-finite. This generalises, and shortens the proof of, an earlier result for stable pseudofinite groups. An example is given of an NIP pseudofinite group which is not soluble-by-finite. However, if $\mathcal{C}$ is a class of finite groups such that all infinite ultraproducts of members of $\mathcal{C}$ have NIP theory, then there is a bound on the index of the soluble radical of any member of $\mathcal{C}$. We also survey some ways in which model theory gives information on families of finite simple groups, particularly concerning products of images of word maps.

\end{abstract}

\maketitle

  \parskip 1mm

\newcommand{\Ind}{
 \setbox0=\hbox{$x$}\kern\wd0\hbox to 0pt{\hss$
 \mid$\hss}\lower.9\ht0\hbox to 0pt{\hss$\smile$\hss}\kern\wd0
}
\newcommand{\indep}[3]{
 #1\mathop{\mathpalette\Ind{}}_{#2}#3
}
\newcommand{\Indep}{\indep{}{}{}}

\newcommand{\Notind}{
 \setbox0=\hbox{$x$}\kern\wd0\hbox to 0pt{\mathchardef
 \nn=12854\hss$\nn$\kern1.4\wd0\hss}\hbox to
0pt{\hss$\mid$\hss}\lower.9\ht0
 \hbox to 0pt{\hss$\smile$\hss}\kern\wd0
}
\newcommand{\depen}[3]{
 #1\mathop{\mathpalette\Notind{}}_{#2}#3
}

\newcommand\eq{{\rm eq}}
\newcommand\acl{{\rm acl}}
\newcommand\SL{{\rm SL}}
\newcommand\Ff{{\mathbb F}}
\newcommand\Rr{{\mathbb R}}
\newcommand\tp{{\rm tp}}
\newcommand\Th{{\rm Th}}
\newcommand\Soc{{\rm Soc}}
\newcommand\Alt{{\rm Alt}}
\newcommand\Aut{{\rm Aut}}
\newcommand\SU{{\rm SU}}

\newcommand\C{{\mathcal C}}
\newcommand\K{{\mathcal K}}

\def\Ind#1#2{#1\setbox0=\hbox{$#1x$}\kern\wd0\hbox to 0pt{\hss$#1\mid$\hss}
\lower.9\ht0\hbox to 0pt{\hss$#1\smile$\hss}\kern\wd0}
\def\dnf{\mathop{\mathpalette\Ind{}}}
\def\Notind#1#2{#1\setbox0=\hbox{$#1x$}\kern\wd0\hbox to 0pt{\mathchardef
\nn=12854\hss$#1\nn$\kern1.4\wd0\hss}\hbox to
0pt{\hss$#1\mid$\hss}\lower.9\ht0 \hbox to
0pt{\hss$#1\smile$\hss}\kern\wd0}
\def\df{\mathop{\mathpalette\Notind{}}}

\section{Introduction}

We consider in this paper groups $G$ which are {\em pseudofinite}, that is, infinite groups which satisfy every first order sentence (in the language $L_g$ of groups) which holds in all finite groups. Equivalently, $G$ is elementarily equivalent to an infinite ultraproduct of finite groups. Or equivalently again, $G$ is an infinite group with the {\em finite model property}\/: every sentence in the theory of the group has a finite model.
We consider the structure of $G$, under the assumption that the first order theory $\Th(G)$ of $G$ satisfies various generalisations of model theoretic stability.

It was shown in \cite{mactent} that any stable pseudofinite group $G$ has a definable soluble normal subgroup of finite index. This is not surprising; for by a classification due to Wilson \cite{wilson1}
(with a slight strengthening due to Ryten -- see \cite[Proposition 2.14]{ejmr}) -- any infinite pseudofinite {\em simple} group
is a group of Lie type over a pseudofinite field, and in particular interprets a pseudofinite field \cite[5.2.4, 5.3.3, 5.4.3]{ryten}, and so has unstable theory by Duret \cite{duret}. However, an intricate argument with centralisers was needed in \cite{mactent} to bound the derived length of soluble normal subgroups.

One generalisation of stability is the notion of {\em simple} theory. Pseudofinite fields (and certain difference fields, that is, fields equipped with a specified automorphism) are simple, in fact supersimple of finite rank, and it follows from Wilson's classification that every simple pseudofinite group is interpretable in such a structure. Hence, every simple pseudofinite group has supersimple finite rank theory; this follows from the results of Hrushovski \cite{hrushovski} and is made explicit in \cite{ejmr} (note that measurable structures are supersimple of finite rank -- see e.g. \cite[Corollary 3.7]{em}). A satisfactory structure theory for pseudofinite groups with supersimple finite rank theory -- under an additional and probably unnecessary assumption that $\exists^{\infty}$ is definable in $T^{\eq}$ -- was initiated in \cite{ejmr}. The class of supersimple finite rank structures is sufficiently rich to include a lot of pseudofinite group theory, as indicated by, for example, \cite[4.11, 4.12]{lmt}. Possible applications of the model theory of supersimple theories to finite simple groups are discussed in the final section of the present paper.

Another generalisation of stability of considerable current interest is that of {\em NIP}, or {\em dependent} theory. A formula $\phi(\bar{x},\bar{y})$ has the {\em independence property} with respect to  $T$
if there is $M\models T$ and a set $\{\bar{a}_i:i\in \omega\}\subset M^{l(\bar{x})}$ such that for all 
$S\subseteq \omega$ there is $\bar{b}_S\in M^{l(\bar{y})}$ such that for all $i\in \omega$,
$M\models \phi(\bar{a}_i,\bar{b}_S)$ if and only if $i\in S$. A theory $T$ is {\em NIP} if no formula has the independence property with respect to $T$. Any stable theory is simple and NIP, and any theory which is {\em both} simple and NIP is stable. For groups, by the Baldwin-Saxl Theorem (see \cite{saxl}, or \cite[Fact 0.17]{ekp}) the NIP condition implies a useful chain condition: if $G$ is an NIP group, then for every formula $\phi(x,\bar{y})$ there is a natural number $n_\phi$ such that every {\em finite} intersection of $\phi$-definable groups is an intersection of $n_\phi$ $\phi$-definable groups. By Wilson's theorem, there is no  simple  pseudofinite group with NIP theory, and we expected this, together with the above chain condition, to yield virtual solubility for pseudofinite groups with NIP theory. However, this is false, and in Section 3 below we give a construction of a pseudofinite group $G$ with NIP theory which is not soluble-by-finite.

Our main theorem is the following. We say that a group $G$ has the {\em centraliser chain condition} if there is a natural number $n=n(G)$ such that there do not exist subsets $F_1,\ldots,F_{n+1}\subset G$ with
$$C_G(F_1)<\ldots<C_G(F_{n+1}).$$

\begin{theorem} \label{main}
Let $G$ be a pseudofinite group with NIP theory, and suppose that $G$ satisfies the centraliser chain condition. Then $G$ has  a soluble definable normal subgroup of finite index.
\end{theorem}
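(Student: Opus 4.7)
The plan is to identify a definable characteristic subgroup $R\trianglelefteq G$ that plays the role of a ``pseudofinite soluble radical,'' and then prove separately (a) that $G/R$ is finite and (b) that $R$ is soluble. Together with the normality of $R$, these two facts establish the theorem.

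For the construction of $R$ I would invoke the theorem of Wilson on uniform first-order definability of the soluble radical in finite groups, which provides a formula $\rho(x)$ in the language of groups such that $\rho(F)$ equals the soluble radical of $F$ for every finite group $F$. Writing $G\equiv\prod_{\mathcal{U}}G_i$ as elementarily equivalent to an ultraproduct of finite groups and applying \L{}o\'s's theorem, $R:=\rho(G)$ becomes a definable characteristic subgroup of $G$.

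For part (a), suppose for contradiction that $G/R$ is infinite. By the defining property of $R$, the quotient $G/R$ has no nontrivial definable soluble normal subgroup; using the analogous uniform definability in finite groups of the socle of a group with trivial soluble radical as a direct product of nonabelian simple groups, one extracts a definable section $S$ of $G$ that is an infinite pseudofinite simple group (one of the simple factors, picked out by the ultrafilter). By the Wilson-Ryten classification recalled in the introduction, $S$ is of Lie type over a pseudofinite field $K$ which is interpretable in $S$, and by Duret's theorem $K$ has the independence property; since NIP is preserved under interpretation and passage to definable sections, this contradicts the NIP hypothesis on $G$.

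The solubility of $R$ in part (b) is the main obstacle, and it is here that the centraliser chain condition must carry most of the weight. The aim is to bound the derived length of $R$ by a function of $n(G)$. The natural approach is to consider the descending derived series $R\geq R'\geq R''\geq\cdots$ together with the associated ascending chain $Z(R)=C_R(R)\leq C_R(R')\leq C_R(R'')\leq\cdots$ of centralisers in $R$, and convert the centraliser chain bound into a bound on the derived length via Baldwin-Saxl (a consequence of NIP, which allows centralisers of arbitrary subsets to be replaced by centralisers of finite subsets of uniformly bounded size). Equivalently, one argues in the finite approximations: if the derived lengths of the soluble radicals $R(G_i)$ were unbounded along $\mathcal{U}$, one could manufacture arbitrarily long chains of centralisers inside these finite soluble groups (since the derived length of a finite soluble group gives, up to a uniform constant, a lower bound on its centraliser chain length), contradicting the centraliser chain condition transferred to $\mathcal{U}$-many $G_i$ via \L{}o\'s — noting that under NIP the centraliser chain condition is itself first-order expressible.
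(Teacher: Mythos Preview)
Your overall architecture matches the paper's exactly: define $R$ via Wilson's formula $\psi$, show $|G:R|<\infty$ using that infinite simple pseudofinite groups interpret pseudofinite fields (hence have IP), and bound the derived length of $R$ using the centraliser chain condition transferred to the finite approximations. So the strategy is right.

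There is, however, a genuine gap in part~(b). Your first suggested mechanism --- the ascending chain $C_R(R)\leq C_R(R')\leq C_R(R'')\leq\cdots$ --- does not do the job: there is no reason this chain is strictly increasing even when the derived series is strictly decreasing, so a bound on centraliser chains gives no direct bound on derived length this way. Your fallback claim, that ``the derived length of a finite soluble group gives, up to a uniform constant, a lower bound on its centraliser chain length,'' is precisely the content of Khukhro's theorem (Theorem~\ref{khuk} in the paper), and it is a nontrivial result, not a routine observation. The paper invokes it as a black box; you should too, rather than suggesting it follows by a simple manipulation of centralisers and Baldwin--Saxl. (Incidentally, NIP is not needed to make the centraliser chain condition with bound $e$ first-order: a chain of centralisers of arbitrary subsets of length $e+1$ can always be replaced by one using subsets of size at most $e$, so there is a sentence $\tau_e$ expressing the condition outright.)

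A smaller point on part~(a): the step ``extract a definable section $S$ of $G$ that is an infinite pseudofinite simple group'' is where most of the work lies. The socle of $\bar G_i=G_i/R(G_i)$ is a direct product of nonabelian simple groups, but the individual simple factors are not uniformly definable for free. The paper's Proposition~\ref{NIPclass} first uses NIP (via the commutator formula $[y,z]\neq 1$) to bound the number of simple factors and their Lie rank, and only then obtains uniform definability of the factors through bounded conjugate-width. You should expect to reproduce those steps rather than assert uniform definability of a simple factor at the outset; without the NIP-driven bounds, no such uniform definition exists.
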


We obtain some information about finite groups just under an NIP assumption. Let us say that the class $\mathcal{C}$ of finite structures is an {\em NIP class} if every infinite ultraproduct of members of $\mathcal{C}$ has NIP theory. As  a step in the proof of Theorem~\ref{main} we obtain the following result. Here, and throughout the paper, if $G$ is a finite group we  denote by $R(G)$ its {\em soluble radical}, that is, the unique largest soluble normal subgroup of $G$.

\begin{proposition} \label{NIPclass}
Let $\mathcal{C}$ be an NIP class of finite groups. Then there is $d=d(\mathcal{C})\in {\mathbb N}$ such that  $|G:R(G)|\leq d$ for every $G\in \mathcal{C}$.
\end{proposition}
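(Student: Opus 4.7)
The plan is to argue by contradiction. Suppose no such $d$ exists: then there exist $G_i \in \mathcal{C}$ with $|G_i : R(G_i)| \to \infty$; fix a non-principal ultrafilter $\mathcal{U}$ on $\mathbb{N}$ along which this sequence is unbounded and set $G := \prod_i G_i/\mathcal{U}$. By hypothesis $G$ is NIP. By Wilson's theorem on the uniform first-order definability of the soluble radical in finite groups, there is an $L_g$-formula $\rho(x)$ with $\rho(K) = R(K)$ for every finite $K$; hence $\rho$ defines $\prod R(G_i)/\mathcal{U}$ in $G$, and the interpretable quotient $H := G/\rho(G) \cong \prod H_i/\mathcal{U}$ (where $H_i := G_i/R(G_i)$) is infinite, interpretable in $G$, hence NIP, with $R(H_i) = 1$ for every $i$. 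Consequently each $\Soc(H_i) = T_{i,1}\times\cdots\times T_{i,k_i}$ is a direct product of nonabelian finite simple groups, and $H_i$ embeds into $\Aut(\Soc(H_i))$ by conjugation.

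The next step bounds $k_i$ via the Baldwin-Saxl chain condition applied in $H$ to the commuting formula $\phi(x,y)\equiv[x,y]=1$: since $H$ is NIP there is $n_\phi$ such that any intersection of centralisers in $H$ is an intersection of at most $n_\phi$ of them, and by \L{}o\'s the same uniform bound holds in $\mathcal{U}$-almost all $H_i$. To force $k_i\leq n_\phi$, for each $j\leq k_i$ choose a non-central $t_{i,j}\in T_{i,j}$ and let $a_{i,j}\in\Soc(H_i)$ have coordinate $t_{i,j}$ at position $j$ and trivial elsewhere; for any $j_0$, any element of $T_{i,j_0}$ not commuting with $t_{i,j_0}$, viewed at coordinate $j_0$, centralises every $a_{i,j}$ with $j\neq j_0$ but not $a_{i,j_0}$, so $\bigcap_j C_{H_i}(a_{i,j})$ is strictly smaller than every proper sub-intersection, forcing $k_i\leq n_\phi$. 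Combined with $H_i\hookrightarrow\Aut(\Soc(H_i))$ and the CFSG-based bound $|\Aut(T)|\leq |T|^c$ for nonabelian simple $T$, this yields $|H_i|\leq C(\max_j|T_{i,j}|)^{c k_i}$, so $\max_j|T_{i,j}|\to\infty$ along $\mathcal{U}$; let $T_i^*$ denote a largest factor.

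Finally, since the minimal normal subgroups of a direct product of nonabelian finite simple groups are exactly the direct factors, and using uniform definability of $\Soc$ in finite groups with $R=1$ together with a uniform (CFSG-based) bound on the width of normal closures in finite simple groups, the subgroup $T_i^*$ is uniformly first-order definable in $H_i$ with a single parameter (any non-trivial element of $T_i^*$). Hence $T^* := \prod T_i^*/\mathcal{U}$ is parameter-definable in $H$ and is an infinite simple pseudofinite group. By the Wilson-Ryten theorem recalled in the Introduction, $T^*\cong X(F)$ for some Chevalley type $X$ and pseudofinite field $F$, and $T^*$ interprets $F$. By Duret's theorem, $F$ has IP, so $T^*$ does, so $H$ does, so $G$ does, contradicting the NIP hypothesis on $\mathcal{C}$.

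The main technical difficulty is the uniform definability of the single simple factor $T_i^*$ of $\Soc(H_i)$ in $H_i$: it reduces to Wilson-type uniform definability results (for $R$ and $\Soc$ in finite groups) and to uniform width bounds, via CFSG, on the expression of elements of a finite simple group as bounded products of conjugates of a fixed non-trivial element. Care is also needed to keep the ultraproducts infinite through any ultrafilter refinements, so that the NIP hypothesis on $\mathcal{C}$ remains applicable.
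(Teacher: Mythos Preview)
Your overall architecture matches the paper's proof closely: pass to the quotient by the uniformly definable radical, look at the socle of each $H_i=G_i/R(G_i)$, bound the number of simple factors, uniformly define a single simple factor, and derive IP from an infinite definable simple pseudofinite group via Duret. Your use of Baldwin--Saxl to bound the number $k_i$ of socle factors is a legitimate rephrasing of the paper's Claim~1, which instead exhibits an IP configuration directly from many commuting nonabelian factors; the two arguments encode the same combinatorics.

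There is, however, a genuine gap. You assert a ``uniform (CFSG-based) bound on the width of normal closures in finite simple groups'', and you need this to make $T_i^*$ uniformly definable from a single nontrivial element. No such uniform bound exists across \emph{all} finite simple groups: in $\Alt_n$ the conjugacy class of a $3$-cycle has width of order $n$, and similarly in classical groups of growing Lie rank the minimal covering number grows without bound. The paper deals with this by an extra step (its Claim~2): it bounds the Lie rank of the factors $T_{i,j}$, observing that alternating groups of large degree and classical groups of large rank contain arbitrarily many pairwise-commuting copies of a fixed nonabelian simple group, which again yields IP (equivalently, violates your Baldwin--Saxl bound inside a single factor). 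Only once the Lie rank is bounded does one get the uniform width bound (the paper's Claim~3), and hence uniform definability of the $T_{i,j}$.

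This missing step also undermines your final paragraph in a second way: without a bound on the Lie rank, the ultraproduct $T^*=\prod T_i^*/\mathcal{U}$ need not be simple at all (an ultraproduct of alternating groups of unbounded degree, or of classical groups of unbounded rank, is typically not simple), so you cannot invoke the Wilson--Ryten classification. The fix is easy and uses your own tool: rerun your Baldwin--Saxl irredundancy argument with the $a_{i,j}$ taken inside the commuting copies of $\Alt_5$ (or $\mathrm{PSL}_2$) sitting in a single large-rank factor; this bounds the Lie rank, after which the remainder of your argument goes through. The appeal to a separate ``uniform definability of $\Soc$'' result is then also unnecessary, as in the paper.
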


The notion of {\em rosy} theory is a common generalisation of the notions of {\em o-minimal} theory and {\em simple} (and hence also of {\em stable}) theory. The concept was introduced in \cite{ons} and developed in \cite{adler}. We omit the definition of rosiness, but note that by \cite[Definition 0.3]{ekp}, a theory $T$ is rosy if and only if there is an independence relation $\dnf$ on real {\em and imaginary} tuples which satisfies the following natural conditions :

(i) $\dnf$ is automorphism invariant.

(ii) If $c\in \acl(aB)\setminus \acl(B)$, then $\depen{a}{B}{c}$.

(iii) If $a\dnf_B C$ and $B \cup C \subseteq D$, then there is $a' \in \tp(a/BC)$ with $a'\dnf_{B}{D}$.

(iv) There is $\lambda$ such that for any $a$, if $(B_i)_{i<\alpha}$ are sets with $B_i \subset B_j$
 whenever $i<j$ and $\depen{a}{B_i}{B_j}$ for $i<j<\alpha$, then $\alpha<\lambda$.

(v) If $B \subseteq C \subseteq D$, then $a \dnf_B D$ if and only if $a\dnf_B C$ and $a \dnf_C D$.

(vi) $C \dnf_A B$ if and only if $c \dnf_A B$ for any finite $c \subseteq C$.

(vii) $a\dnf_C b$ if and only if $b \dnf_C a$.

\noindent
A structure with an infinite descending chain of uniformly definable equivalence relations can never be rosy -- see for example the proof of Proposition 1.3 in \cite{ekp}. In particular, a field with a non-trivial definable valuation can never be rosy, and more generally a group with an infinite strictly descending chain of uniformly definable subgroups cannot be rosy. In combination
with the consequence mentioned above of the Baldwin-Saxl Theorem this yields the following, for groups. 

\begin{proposition}\cite[Corollary 1.8]{ekp}\label{centcon}
Any group definable in an NIP rosy theory has the centraliser chain condition.
\end{proposition}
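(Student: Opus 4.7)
The plan is to combine the Baldwin-Saxl consequence of NIP, recalled in the introduction, with the incompatibility of rosiness with infinite chains of uniformly definable subgroups, recalled just before the statement. Let $G$ be a group definable in an NIP rosy theory.

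First I would apply Baldwin-Saxl to the centraliser formula $\phi(x,y)\equiv xy=yx$: the NIP hypothesis provides $n=n_\phi\in\mathbb{N}$ such that every centraliser $C_G(F)$ of a finite set $F\subseteq G$ coincides with $C_G(F')$ for some $F'\subseteq F$ with $|F'|\le n$. Hence any strict chain of centralisers can be refined to a strict chain among the uniformly definable family $\{\psi(G;\bar a):\bar a\in G^n\}$, where
\[
\psi(x;y_1,\ldots,y_n)\equiv\bigwedge_{j=1}^n xy_j=y_jx.
\]

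Next, if the centraliser chain condition were to fail, strict chains among the $\psi$-definable subgroups would exist of every finite length, and a routine compactness argument in a sufficiently saturated elementary extension would produce an infinite strict chain $H_1\subsetneq H_2\subsetneq\cdots$ of such subgroups. Passing to coset equivalence relations $E_i(x_1,x_2)\equiv x_1x_2^{-1}\in H_i$ converts this into an infinite uniformly definable chain of equivalence relations, which contradicts rosiness.

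The main subtlety I expect is the direction of the chain: the sentence cited just before the statement forbids descending chains of uniformly definable subgroups, but the Baldwin-Saxl compactness argument naturally produces an ascending one. Bridging this gap is the content of \cite[Proposition 1.3]{ekp}, whose proof shows that from a strict chain of uniformly definable equivalence relations in either direction one can, by picking coset representatives, extract a sequence violating the local character axiom~(iv) of rosy independence. This is the argument underlying the cited \cite[Corollary 1.8]{ekp}.
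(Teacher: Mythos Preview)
Your proposal is correct and matches the paper's approach: the paper does not give a self-contained proof but simply cites \cite[Corollary~1.8]{ekp} after observing that the Baldwin--Saxl consequence of NIP, combined with the incompatibility of rosiness with an infinite chain of uniformly definable subgroups, yields the result. The ``subtlety'' you flag about chain direction is not actually present: reversing each finite witnessing chain before invoking compactness produces an infinite strictly \emph{descending} chain of $\psi$-definable subgroups directly, so no separate bridging argument is needed.
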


By Theorem~\ref{main}, this yields immediately the following. 

\begin{corollary}
Let $G$ be a pseudofinite group with NIP rosy theory. Then $G$ has a soluble definable normal subgroup of finite index. 
\end{corollary}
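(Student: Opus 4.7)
The plan is to obtain the corollary as an immediate combination of the two preceding results in the excerpt, with essentially no work beyond checking hypotheses. First I would observe that $G$, being a group, is definable in its own theory by the trivial formula $x = x$, and this theory is NIP and rosy by hypothesis. Proposition \ref{centcon} therefore applies directly to $G$ and yields the centraliser chain condition: some natural number $n = n(G)$ bounds the length of any strictly ascending chain $C_G(F_1) < \cdots < C_G(F_{n+1})$ of centralisers of subsets $F_i \subseteq G$.

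Next, I would verify that $G$ now meets every hypothesis of Theorem \ref{main}: it is pseudofinite by assumption, its theory is NIP (since NIP rosy trivially implies NIP), and we have just extracted the centraliser chain condition. Applying Theorem \ref{main} then produces a definable normal soluble subgroup of finite index, which is precisely the desired conclusion.

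Because both inputs are already in hand, there is no genuine obstacle here; the corollary is a direct specialisation. The substantive content sits entirely in the two cited results, namely Proposition \ref{centcon} (whose proof, as sketched in the excerpt, combines Baldwin--Saxl with the fact that rosy theories forbid infinite descending chains of uniformly definable equivalence relations, applied to cosets of definable subgroups) and Theorem \ref{main} (whose proof will presumably rely on Proposition \ref{NIPclass} about NIP classes of finite groups and a bound on the index of the soluble radical). The only conceivable subtlety in writing the corollary out is making explicit that Proposition \ref{centcon}, although stated for groups definable in an NIP rosy theory, applies to $G$ itself viewed inside its own theory; but this is routine.
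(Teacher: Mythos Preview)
Your proposal is correct and matches the paper's own argument exactly: the corollary is stated as an immediate consequence of Proposition~\ref{centcon} (yielding the centraliser chain condition) followed by Theorem~\ref{main}. The only point you flag as a subtlety---that $G$ is a group definable in its own NIP rosy theory---is indeed routine and is left implicit in the paper as well.
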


We should not expect here to replace `soluble' by `nilpotent', since  examples (involving Chapuis, Simonetta, Khelif, and Zilber) are mentioned at the end of \cite{mactent} of  stable pseudofinite groups which are not nilpotent-by-finite.

Theorem~\ref{main} is proved in Section 2. In addition to Proposition~\ref{centcon}, and the classification of simple pseudofinite groups, we use the following two results. 

\begin{theorem}\cite[Wilson]{wilson2}\label{radical}
There is a formula $\psi(x)$ such that for every finite group $G$, we have
$R(G)=\{x\in G: G\models \psi(x)\}$.
\end{theorem}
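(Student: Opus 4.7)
My plan is to reduce the claim to a ``local'' characterisation of $R(G)$ expressible by a uniform first-order formula, with the classification of finite simple groups (CFSG) as the central algebraic input.

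The starting point I would adopt is an algebraic theorem of the shape: there is a fixed $k$ such that in every finite group $G$, an element $x$ lies in $R(G)$ if and only if, for every tuple $\bar{y}=(y_1,\ldots,y_k)\in G^k$, the subgroup $\langle x,\bar{y}\rangle$ is soluble. The most elegant version ($k=1$) is the theorem of Guralnick, Kunyavskii, Plotkin and Shalev confirming a variant of Thompson's conjecture, namely that $x\in R(G)$ iff $\langle x,y\rangle$ is soluble for every $y\in G$. I would take this as a black-box group-theoretic input.

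The main obstacle is that ``the subgroup $\langle x,\bar{y}\rangle$ is soluble'' is not literally first-order: the subgroup generated by a tuple is not a priori definable, and a $(k{+}1)$-generated soluble finite group can have arbitrarily large derived length, so one cannot simply bound the length of the derived series. To turn this into a first-order condition, I would exploit CFSG to obtain a uniform first-order witness for non-solubility. Every non-soluble finite group has a non-abelian finite simple section, and by CFSG the family of such sections consists of the alternating groups, the groups of Lie type over finite fields, and the (finitely many) sporadic groups. Using uniform generation and word-length results for these families, one would produce a fixed finite list of words $w_1(x,\bar y,\bar z),\ldots,w_m(x,\bar y,\bar z)$ together with a fixed collection of multiplicative relations they should satisfy, such that these words define a non-abelian simple section of $\langle x,\bar y\rangle$ whenever such a section exists.

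The formula $\psi(x)$ would then take the shape
\[
\psi(x)\;\equiv\;\forall\bar y\;\neg\exists\bar z\;\bigwedge_{i}\theta_i(x,\bar y,\bar z),
\]
where the $\theta_i$ encode the commutator and power relations that witness a non-abelian finite simple section inside $\langle x,\bar y\rangle$ generated by the words $w_j(x,\bar y,\bar z)$. The hard part is the CFSG-based construction of a \emph{single} formula, of bounded quantifier complexity and independent of the ambient $G$, that uniformly detects the presence of \emph{any} non-abelian simple section; this is the technical core of the argument, and it is precisely where the uniformity provided by CFSG, together with quantitative facts about generation and equation solving in the finite simple groups, is genuinely indispensable.
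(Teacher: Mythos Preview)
The paper does not contain a proof of this theorem: it is quoted from Wilson \cite{wilson2} and used as a black box (see the sentence introducing Theorems~\ref{radical} and~\ref{khuk}). So there is no proof in the present paper to compare your proposal against.

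That said, let me comment on your outline on its own merits. The reduction you propose---via the Guralnick--Kunyavski\u{\i}--Plotkin--Shalev characterisation $x\in R(G)\Longleftrightarrow\langle x,y\rangle$ is soluble for all $y\in G$---is a natural starting point, and is in fact the one Wilson himself uses. The genuine gap is in the step you flag as ``the technical core'': encoding ``$\langle x,y\rangle$ has a non-abelian simple section'' by a single formula $\exists\bar z\,\bigwedge_i\theta_i(x,y,\bar z)$ built from finitely many word-equations. As you describe it, this cannot work. A finite conjunction of relations among words $w_j(x,y,\bar z)$ can at best force the subgroup generated by the $w_j$ to be a quotient of a \emph{fixed} finitely presented group; to guarantee a non-abelian simple quotient you must also assert non-collapse, which is fine, but then each such block of relations detects at most one isomorphism type of simple group. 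Even restricting to Thompson's minimal simple groups (the relevant ones, since any non-soluble finite group has one of these as a section) leaves you with infinitely many pairwise non-isomorphic groups---already $\mathrm{PSL}_2(p)$ for primes $p$---with no common finite presentation, so no finite disjunction of such blocks suffices. ``Uniform generation and word-length results'' for simple groups do not help here: bounded generation controls how elements are expressed \emph{inside} a given simple group, not how to recognise the group from outside by a field-independent set of relations.

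Wilson's actual argument avoids this obstacle entirely: rather than trying to name a simple section, he gives a direct first-order criterion for solubility of a $2$-generator subgroup using structural information about minimal non-soluble groups (in particular properties of their Fitting subgroup and Sylow structure), together with uniform bounds extracted from CFSG. So your outline identifies the right reduction and the right difficulty, but the mechanism you propose for the encoding step would not work as stated, and a different idea is needed there.
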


\begin{theorem}\cite[Khukhro]{khukhro}\label{khuk}
There is a function $f:{\mathbb N}\to {\mathbb N}$, such that for any $d\in {\mathbb N}$,  if $G$ is
a finite soluble group with no strictly descending chain of centralisers of length $d+1$, then
$G$ has derived length at most $f(d)$.
\end{theorem}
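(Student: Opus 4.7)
The plan is to bound the derived length of $G$ by controlling separately two invariants: the Fitting length $h$ of $G$, and the derived length of each nilpotent Fitting factor, each as a function of $d$. Writing the Fitting series as $1=F_0<F_1<\cdots<F_h=G$, I aim to show $h\leq h(d)$ and that each $F_{i+1}/F_i$ has derived length at most $r(d)$, giving derived length of $G$ at most $h(d)\cdot r(d)=:f(d)$.

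To bound $h$, I would use the standard fact that in a finite soluble group $C_{G/F_i}(F_{i+1}/F_i)\leq F_{i+1}/F_i$, so each successive Fitting factor is self-centralising in the quotient. Starting from the top of the Fitting series and working down, pick elements $g_1,\ldots,g_h$ with $g_i$ lying in a preimage of $F_{h-i+1}\setminus F_{h-i}$, chosen so that adjoining $g_i$ strictly shrinks the centraliser. This yields a strictly descending chain
\[
C_G(g_1)\supsetneq C_G(g_1,g_2)\supsetneq\cdots\supsetneq C_G(g_1,\ldots,g_h)
\]
in $G$, forcing $h\leq d$.

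For a nilpotent factor $P=F_{i+1}/F_i$, the key observation is that centraliser chains inside a subgroup $H\leq G$ always lift to $G$: if $C_H(\bar x)\supsetneq C_H(\bar x,y)$ then any witness in $C_H(\bar x)\setminus C_H(\bar x,y)$ also witnesses $C_G(\bar x)\supsetneq C_G(\bar x,y)$. Since a finite nilpotent group is a direct product of its Sylow subgroups, it suffices to bound the nilpotency class of each Sylow $p$-subgroup of $P$, working with preimages in $F_{i+1}$. In a $p$-group of class $c$, choosing representatives just outside successive terms of the upper central series produces a centraliser chain of length comparable to $c$, forcing $c\leq c(d)$. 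A nilpotent group of class $c$ has derived length at most $\lceil\log_2(c+1)\rceil$, giving $r(d)$.

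The principal obstacle is the second step: centraliser chains restrict cleanly to subgroups but behave badly under passage to quotients, so bounding the class of $F_{i+1}/F_i$ requires realising the chain via preimages in $G$ and a delicate analysis of how the upper central series of the quotient is witnessed by elements of the preimage and their centralisers in $G$. In Khukhro's actual proof this is typically handled by passing to the associated Lie ring of a $p$-group (e.g.\ via the Lazard correspondence), where the centraliser chain condition translates into Engel-type constraints that bound the nilpotency class of the Lie ring, and this bound then transfers back to $P$.
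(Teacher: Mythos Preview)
The paper does not prove this theorem; it is quoted as an external result of Khukhro and used as a black box in the proof of Theorem~\ref{main}. So there is no in-paper proof to compare against.

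As an assessment of your sketch on its own merits: the two-step decomposition (bound Fitting height, then bound the derived length of each nilpotent Fitting factor) is indeed the structure of Khukhro's argument, and your identification of the Lie ring method for the $p$-group step is correct. However, your Step~1 is not yet an argument. You assert that elements $g_i\in F_{h-i+1}\setminus F_{h-i}$ can be chosen so that each successive centraliser is strictly smaller, but the self-centralising property $C_{G/F_i}(F_{i+1}/F_i)\le F_{i+1}/F_i$ lives in the \emph{quotients}, and --- as you yourself note for Step~2 --- centraliser chains do not lift from quotients back to $G$. Concretely, once $g_1$ is fixed, $C_G(g_1)$ may be too small to contain any witness against an arbitrary $g_2\in F_{h-1}\setminus F_{h-2}$; nothing in your setup forces $C_G(g_1)\not\subseteq C_G(g_2)$. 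Producing such a chain in $G$ (rather than in the successive quotients) is genuinely the hard part of bounding the Fitting height, and in Khukhro's paper it is handled by a separate inductive argument, not by the one-line choice you describe. Your Step~2 correctly flags the obstacle and the remedy, but remains a pointer to the literature rather than a proof.
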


The final section of the paper is a discussion of some possible applications of model theory to structural questions on families of finite simple groups of fixed Lie rank. There are three main sources of applications: a generalisation of the Zilber Indecomposability Theorem for groups in supersimple theories; some still-unpublished work of Ryten showing that any family of finite simple groups is an `asymptotic class', so that cardinalities of definable sets satisfy Lang-Weil-like uniformities; and information on generic types of groups in simple theories. No new results here are given. However the methods give, for example, an alternative approach to some recent advances on word maps, admittedly proving weaker results. For the Suzuki and Ree groups there is heavy dependence on a major result of Hrushovski \cite{hrushovski}.

\smallskip

{\em Acknowledgement.} We thank Sasha Borovik for drawing our attention to Theorem~\ref{khuk}.

\section{Proof of Theorem~\ref{main}.}
{\em Proof of Proposition~\ref{NIPclass}.}
Let $\mathcal{C}=\{G_i:i\in {\mathbb N}\}$ be a class of finite groups such that every non-principal ultraproduct of members of $\mathcal{C}$ has NIP theory. By Theorem~\ref{radical}, with $\psi(x)$ the formula given in that theorem, for each $i\in\omega$ we have $R(G_i)=\{x\in G_i: G_i\models \psi(x)\}$. By \L{}os's Theorem, $\psi$ defines a normal subgroup, denoted by $\psi(G)$, of any ultrapower $G$ of members of $\mathcal{C}$.

Write $\bar{G}_i:=G_i/R(G_i)$, and let $S_i:=\Soc(\bar{G}_i)$, the direct product of the minimal normal subgroups of $\bar{G}_i$. By the maximality of $R(G_i)$, each minimal normal subgroup of $\bar{G}_i$ is non-abelian and hence each $S_i$ can be written as a direct product of non-abelian simple groups. 

\smallskip

{\em Claim 1.} There is  $t\in {\mathbb N}$ such that  each  $S_i$ is a direct product of at most $t$ distinct non-abelian simple groups.

{\em Proof of Claim.} Otherwise for each $n\in {\mathbb N}$ there are infinitely many groups $G_i$ such that $S_i$ contains at least $n$ non-abelian simple
factors. If $T_1\times\ldots\times T_n$ is such a product, pick $x_j,y_j\in T_j$ with $[x_j,y_j]\neq 1$. For any $w\subset\{1,\ldots n\}$ we find $z_w$
such that $[x_j,z_w]=1$ if and only if $j\in w$ by putting $z_w=\Pi_{j\notin w}y_j$.
 It follows by \L{}os's Theorem that a non-principal ultrafilter can be chosen on ${\mathbb N}$ so that the formula $\chi(y,z)$
of form $yz\neq zy$ witnesses that $\Pi_i \bar{G}_i/\mathcal{U}$ has the independence property. Thus, as $\bar{G}_i$ is uniformly interpretable in $G_i$, the infinite group
$\Pi_{i\in {\mathbb N}} G_i/\mathcal{U}$ does not have NIP theory, a contradiction.

\smallskip

Thus, we may reduce to the case when each $S_i$ is a direct product of exactly $c$ non-abelian simple groups, namely $S_i =T_{i,1}\times\ldots \times T_{i,c}$, where each $T_{i,j}$ is non-abelian simple.

\smallskip

{\em Claim 2.} There is $e\in {\mathbb N}$  such that  any non-abelian simple subgroup of $\bar{G}_i$ has Lie rank at most $e$ (where we define the Lie rank of the alternating group ${\Alt}_n$
to be $n$, and that of the sporadic simple groups to be 1).

{\em Proof of Claim.} We argue as in the proof of Claim 1. It suffices to note that for any $n$, a sufficiently large alternating group contains a  direct product of $n$ copies of $\Alt_5$. Likewise, non-abelian classical simple groups of large rank contain many commuting copies of ${\rm PSL}_2(q)$.

\smallskip

{\em Claim 3.} Let  $\mathcal{F}$ be a family of finite simple groups of fixed Lie rank $e$. Then there is $d=d(e)\in {\mathbb N}$ such that if $K\in \mathcal{F}$ and $g,h\in K\setminus\{1\}$ then $g$ is a product of at most $d$ copies of $h$ and $h^{-1}$.

{\em Proof of Claim.} This is well-known. It follows for example from the theorem in \cite{point} that any non-principal ultraproduct of members of $\mathcal{F}$ is a group of the same Lie type over a pseudofinite field, and so is simple.

\smallskip

By Claims 2 and 3  we obtain the following: there is $b\in {\mathbb N}$  such that for 
each $i,j$ and $x_{i,j}\in T_{i,j}\setminus\{1\}$ any element of $T_{i,j}$ is a product of at most $b$ $T_{i,j}$-conjugates  of $x_{i,j}$ and $x_{i,j}^{-1}$. As $\bar{G}_i$ normalises $S_i$, it follows easily that the minimal normal subgroups of $\bar G_i$ and finally the $T_{i,j}$ themselves are uniformly definable in the $\bar{G}_i$. 
\smallskip

To complete the proof of the proposition, it suffices to show that there is $e\in {\mathbb N}$ such that $|S_i|\leq e$ for all $i$. For suppose this holds. Then $C_i:=C_{\bar{G}_i}(S_i)$ is a normal subgroup of $\bar G_i$. Since $C_i\cap S_i=1$ we have $C_i=1$. Thus, $\bar{G}_i$ embeds in $\Aut(S_i)$, so has order at most $e!$. 

So suppose for a contradiction that there is no finite upper bound on $|S_i|$.
Then by the classification of finite simple groups, there is some Lie type Chev (possibly twisted, but with the Lie rank fixed) such that the $R_{i,j}$ include arbitrarily large finite simple groups of type 
Chev.
 Relabelling if necessary, we may suppose there is a subsequence $(n_i:i\in {\mathbb N})$ of ${\mathbb N}$ 
such that  each finite simple group $R_{n_i,1}$ has Lie type Chev, and $|R_{n_i,1}|\to \infty$ as $i\to \infty$. 
We may suppose that $R_{n_i,1}$ is defined in $\bar{G}_{n_i}$ by the formula $\phi(x,\bar{a}_i)$. 

Let $\mathcal{U}$ be a non-principal ultrafilter on ${\mathbb N}$ containing $N=\{n_i:i\in {\mathbb N}\}$, and hence all cofinite subsets of $N$. Put $G=\Pi_{i\in {\mathbb N}} G_i/\mathcal{U}$, and $\bar{G}:=G/H$, where $H$ is the normal subgroup of $G$ defined by $\psi$. Then there is $\bar{a}\in\bar{G}$ such that $\phi(x,\bar{a})$ defines an infinite ultrapower of groups of type Chev,  and hence, by \cite{point}, a group of Lie type Chev over a pseudofinite field. Such a subgroup has the independence property, by the results of Ryten and  Duret  mentioned above. It follows that $\bar{G}$, and hence $G$, does not have NIP theory, a contradiction. \hfill$\Box$

\medskip

The following lemma is standard.

\begin{lemma} \label{ultra}
Let $L$ be a countable language and $M$ be a pseudofinite $L$-structure. Then there is an infinite class $\mathcal{C}$ of finite structures such that  every infinite ultrapower of members of $\mathcal{C}$ is elementarily equivalent to $M$.
\end{lemma}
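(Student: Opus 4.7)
The plan is to exhibit $\mathcal{C}$ as a sequence of finite structures $(M_n)_{n\in\mathbb{N}}$ obtained by approximating the complete theory $\Th(M)$ sentence by sentence, and then to use completeness of $\Th(M)$ together with \L{}os's theorem to read off the theories of the infinite ultraproducts built from $\mathcal{C}$.

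Since $L$ is countable, the set of $L$-sentences is countable, so I enumerate $\Th(M)=\{\phi_n:n\in\mathbb{N}\}$ and arrange that for each $n$ the sentence ``there exist at least $n$ distinct elements'' appears among $\phi_1,\ldots,\phi_n$; these sentences lie in $\Th(M)$ because every pseudofinite structure is infinite. Set $\psi_n:=\phi_1\wedge\cdots\wedge\phi_n$. Since $M\models\psi_n$ and $M$ is pseudofinite, I can choose a finite $L$-structure $M_n$ with $M_n\models\psi_n$; this forces $|M_n|\geq n$, so the $M_n$ are pairwise non-isomorphic, and $\mathcal{C}:=\{M_n:n\in\mathbb{N}\}$ is an infinite class of finite structures.

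Now let $N=\prod_{i\in I}N_i/\mathcal{U}$ be any infinite ultraproduct of members of $\mathcal{C}$, so $N_i=M_{f(i)}$ for some $f:I\to\mathbb{N}$. If there existed $B\in\mathbb{N}$ with $\{i\in I:f(i)\leq B\}\in\mathcal{U}$, then pigeonhole on the finitely many values $\leq B$ would yield $n_0\leq B$ with $\{i:f(i)=n_0\}\in\mathcal{U}$, forcing $N$ to be an ultrapower of the finite structure $M_{n_0}$ and hence finite, contrary to hypothesis. Therefore $\{i\in I:f(i)\geq k\}\in\mathcal{U}$ for every $k\in\mathbb{N}$; since $M_{f(i)}\models\psi_{f(i)}$ implies $M_{f(i)}\models\phi_k$ once $f(i)\geq k$, \L{}os's theorem gives $N\models\phi_k$. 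Thus $\Th(M)\subseteq\Th(N)$, and completeness of $\Th(M)$ yields $N\equiv M$.

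The only delicate points are bookkeeping: ensuring that the $M_n$ are genuinely distinct (handled by including the ``at least $n$ elements'' sentences among the first $n$ conjuncts) and that the infinitude of $N$ rules out $\mathcal{U}$ concentrating on a single $M_{n_0}$. No ingredient deeper than pseudofiniteness of $M$, completeness of $\Th(M)$, and \L{}os is required.
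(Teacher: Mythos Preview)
Your argument is correct and rests on the same idea as the paper's proof (produce finite structures that eventually satisfy every sentence of $\Th(M)$, then invoke \L{}os and completeness), but the implementation differs: the paper begins from a fixed ultraproduct representation $M\equiv\prod_{i}M_i/\mathcal{U}$ and iteratively thins the index set to an infinite $U\subseteq\mathbb{N}$ on which the $M_j$ cofinally satisfy each $\sigma_i\in\Th(M)$, whereas you build the $M_n$ directly from the finite model property. Your route is arguably a bit more direct, since it never needs to realise $M$ as an ultraproduct; the paper's thinning, on the other hand, has the minor advantage that the resulting class sits inside a family already known to have $M$ as an ultraproduct.

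Two small bookkeeping slips, both harmless. First, requiring the sentence ``there exist at least $n$ elements'' to appear among $\phi_1,\ldots,\phi_n$ for \emph{every} $n$ forces, by induction on $n$, that $\phi_n$ is exactly that sentence for all $n$, leaving no room for the rest of $\Th(M)$; the fix is simply to conjoin ``at least $n$ elements'' to $\psi_n$ separately rather than folding it into the enumeration. Second, $|M_n|\geq n$ gives unbounded sizes and hence that $\mathcal{C}$ is infinite, but it does not give literal pairwise non-isomorphism of the $M_n$; fortunately only the former is needed for your argument.
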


\begin{proof} We may suppose that $M=\Pi_{n\in {\mathbb N}} M_i/\mathcal{U}$ where the $M_i$ are finite with $|M_i|\to\infty$ as $i\to\infty$. Let $\{\sigma_i:i\in {\mathbb N}\}$ list $\Th(M)$.
Iteratively, we find a sequence $U_0\supset U_1\supset \ldots$ of members  of $\mathcal{U}$ such
 that for each $i\in {\mathbb N}$, $U_i$ contains the $i$ smallest elements $n_{i1}<\ldots <n_{ii}$ of $U_{i-1}$, 
and such that for all $i\in {\mathbb N}$ and $j\in U_i$ with $j>n_{ii}$, $M_j\models \sigma_i$. Put $U:=\bigcap_{i\in {\mathbb N}} U_i$. Then $U$ is infinite, and by \L{}os's Theorem, $\mathcal{C}:=\{M_i:i\in U\}$ satisfies the lemma.
\end{proof}

\medskip

{\em Proof of Theorem~\ref{main}.} Let $G$ be a pseudofinite group with  NIP theory, such that every chain of centralisers has length at most $e$. Observe that there is a sentence $\tau_e$ in the language $L_g$ of groups such that for every group $H$, we have $H\models \tau_e$ if and only if every chain of centralisers in $H$ has length at most $e$.
By Lemma~\ref{ultra} there is a set $\mathcal{C}:=\{G_i:i\in {\mathbb N}\}$ and an ultrafilter $\mathcal{U}$ on ${\mathbb N}$
such that (after replacing $G$
by an elementarily equivalent group if necessary) $G=\Pi_{i\in {\mathbb N}} G_i/\mathcal{U}$, and every infinite ultraproduct of members of $\mathcal{C}$ is elementarily equivalent to $G$. It follows that $\mathcal{C}$ is an NIP class of finite groups, so by Proposition~\ref{NIPclass}  there is $d\in {\mathbb N}$ such that $|G_i:R(G_i)|\leq d$ for all $i\in {\mathbb N}$. Also
$M_i\models \tau_e$ for cofinitely many $i\in {\mathbb N}$. Hence, by Theorem~\ref{khuk}, $R(G_i)$ has derived length
at most $f(e)$ for cofinitely many $i\in {\mathbb N}$. The property that the derived length is at most $f(e)$ is first order expressible by a sentence asserting  that a certain word vanishes on a group. Thus, by \L{}os's Theorem, the normal subgroup $R(G):=\{x\in G:G\models \psi(x)\}$ is soluble of derived length at most $f(e)$, and index at most $d$ in $G$.\hfill$\Box$

\section{A pseudofinite NIP group which is not  soluble-by-finite}

We here prove the following theorem.

\begin{theorem}\label{NIPnotrosy} There is a pseudofinite group $G$ with NIP theory which is not soluble-by-finite.
\end{theorem}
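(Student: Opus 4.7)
The plan is to build $G$ as a non-principal ultraproduct $G = \prod_{n \in \mathbb{N}} G_n / \mathcal{U}$ of finite groups $G_n$ whose normal subgroups of uniformly bounded index all have diverging derived length, while keeping the ambient theory tame. A natural candidate is $G_n := \SL_2(\mathbb{Z}/p^n\mathbb{Z})$ for a fixed prime $p \geq 5$. Each $G_n$ is perfect and generated by $p$-elements, so its proper normal subgroups are (up to the centre) the congruence kernels $K_{n,i} := \ker(G_n \to \SL_2(\mathbb{Z}/p^i))$. Under the $p$-adic Lie correspondence, iterated group commutators in $K_{n,i}$ match Lie brackets in $p^i\mathfrak{sl}_2 \otimes \mathbb{Z}_p$, and since $\mathfrak{sl}_2$ is perfect each derived step doubles the $p$-adic valuation; hence $d(K_{n,i}) \sim \log_2(n - i) \to \infty$ with $n$ for any fixed $i$.

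Pseudofiniteness of $G$ is then automatic. For the failure of solubility-by-finiteness, one uses that a normal subgroup of $G_n$ of index at most $m$ must contain some $K_{n, i}$ with $i \leq i_0(m)$ bounded in terms of $m$ alone. Passing to $G$, any definable finite-index normal subgroup thus contains $K_{i_0(m)} := \prod_n K_{n, i_0(m)}/\mathcal{U}$, which by the above has unbounded derived length and so is not soluble. A standard NIP connected-component argument (every finite-index subgroup contains $G^{00}$, which in the present pseudofinite setting sits inside each $K_i$) reduces the non-definable case to the definable one; combined with the fact that any soluble subgroup of finite index contains a soluble normal subgroup of finite index (take the normal core), this shows $G$ is not soluble-by-finite.

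The main obstacle is verifying that $G$ has NIP theory. The strategy is to transfer NIP from the ambient ring: $\SL_2$ is uniformly definable in any commutative ring, so $G_n$ is interpretable in $\mathbb{Z}/p^n\mathbb{Z}$ and it suffices to establish NIP of the ultraproduct ring $R := \prod_n (\mathbb{Z}/p^n\mathbb{Z})/\mathcal{U}$. This $R$ may be realised as the quotient of an ultrapower of the $p$-adic integers $\mathbb{Z}_p$ by a principal ideal of nonstandard $p$-adic valuation; since $\Th(\mathbb{Z}_p)$ is NIP by Ax--Kochen and the subsequent NIP theory of Henselian valued fields of mixed characteristic, and NIP passes to ultrapowers, quotients, and interpretations, $R$, and hence $G = \SL_2(R)$, are NIP.
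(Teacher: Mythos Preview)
Your construction and the NIP argument are essentially the paper's: you take an ultraproduct of the groups $\SL_2(\mathbb{Z}/p^n\mathbb{Z})$ and observe that these are uniformly interpretable in $\mathbb{Z}_p$ (or, equivalently, in a quotient of an ultrapower of $\mathbb{Z}_p$), so the ultraproduct is NIP. The paper does exactly this, packaging the transfer of NIP as a short lemma on uniformly interpretable families.

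Where you diverge is in proving that $G$ is not soluble-by-finite, and here your sketch has a genuine gap. You reduce to showing that every \emph{normal} finite-index subgroup $N$ is non-soluble, handle the definable case via \L{}o\'s and the congruence filtration, and then write that ``a standard NIP connected-component argument (every finite-index subgroup contains $G^{00}$)'' covers the non-definable case. But that parenthetical claim is false in general: $G^{00}$ is the smallest \emph{type-definable} subgroup of bounded index, and an abstract finite-index subgroup need not be type-definable, hence need not contain $G^{00}$. For a concrete failure in a stable (hence NIP) group, take an infinite $\mathbb{F}_2$-vector space $V$: there are no proper definable subgroups of finite index, so $V^{00}=V$, yet $V$ has plenty of abstract index-$2$ hyperplanes. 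So the reduction to the definable case is not justified as stated. (Your approach \emph{can} be rescued: if $N$ is soluble of derived length $d$ and index $m$, then $g^{m}\in N$ for all $g$, so $G$ satisfies the sentence $\forall \bar g\, w_d(g_1^{m},\ldots,g_{2^d}^{m})=1$; transferring this to $G_n$ via \L{}o\'s and analysing the $m$-th-power map on the congruence kernels gives a contradiction for large $n$. But this is real work you have not done.)

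The paper avoids all of this with a cleaner device: it shows that $G$ has a (non-definable) quotient isomorphic to $\SL_2(\mathbb{Z}_p)$, by realising the congruence filtration inside $G$ via added predicates and using $\omega_1$-saturation. Since soluble-by-finite passes to quotients, it then suffices to note that $\SL_2(\mathbb{Z}_p)$ is not soluble-by-finite, which follows because $\SL_2(\mathbb{Z})\leq \SL_2(\mathbb{Z}_p)$ and ${\rm PSL}_2(\mathbb{Z})\cong C_2*C_3$ is not soluble-by-finite. This bypasses any need to control abstract finite-index subgroups of the ultraproduct.
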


If $M$ is a structure and $\phi(x_1,\ldots,x_m,y_1,\ldots,y_n)$ is a formula which 
does not have the independence property in $M$, then there is a greatest natural number $d$ such that there are distinct $\bar{a}_1,\ldots,\bar{a}_d\in M^m$ such that for each $S\subseteq \{1,\ldots d\}$ there is $\bar{b}_S\in M^n$ with, for all $i\in \{1,\ldots,d\}$,
$M\models \phi(\bar{a}_i,\bar{b}_S)\Leftrightarrow  i\in S.$ Such $d$ is called the {\em Vapnik-Cervonenkis dimension}, or {\em VC-dimension}, of the family of definable sets in the $\bar{x}$-variables determined by $\phi$ (or just of the formula $\phi$).
We note the following lemma.

\begin{lemma} \label{NIP}
Let $L,L'$ be first order languages, and let $M$ be an $L$-structure with NIP theory.
Suppose that $\{M_i:i\in I\}$ is a  set of  $L'$-structures which is uniformly interpretable in $M$
(with $I$ an interpretable set of $M$). Let $J$ be an infinite subset of $I$ and $\mathcal{V}$ a non-principal ultrafilter on $J$. Then the ultraproduct $N=\Pi_{j\in J} M_j/\mathcal{V}$ has NIP theory.
\end{lemma}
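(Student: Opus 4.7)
The plan is to bound uniformly the VC-dimension of each $L'$-formula across the structures $\{M_i : i \in I\}$ using the interpretation in $M$, and then observe that such a uniform bound is a first-order property preserved under ultraproducts.

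First, fix an $L'$-formula $\phi'(\bar{x},\bar{y})$. The uniform interpretation supplies an $L$-formula $\phi^*(\bar{x}',\bar{y}',\bar{z})$, working in $M^{\eq}$ to absorb the defining equivalence relation of the interpretation, such that once $\bar{z}$ is specialised to a parameter $\bar{c}_i$ naming the index $i \in I$, we have $M_i \models \phi'(\bar{a},\bar{b})$ if and only if $M^{\eq} \models \phi^*(\bar{a}',\bar{b}',\bar{c}_i)$, where $\bar{a}',\bar{b}'$ are representatives in $M^{\eq}$ of $\bar{a},\bar{b}$. Since $M^{\eq}$ inherits NIP from $M$, the formula $\phi^*(\bar{x}',\bar{w})$, with $\bar{w}=\bar{y}'\bar{z}$ treated as the parameter block, has some finite VC-dimension $d$ in $M^{\eq}$. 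Consequently $\phi'$ has VC-dimension at most $d$ in every $M_i$: a shattering of $d+1$ distinct tuples in $M_i$ by $\phi'$ would lift, via representatives together with $\bar{c}_i$, to a shattering of $d+1$ distinct tuples in $M^{\eq}$ by $\phi^*$, contradicting its VC-bound.

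Second, for each fixed $d$ the assertion ``$\phi'$ has VC-dimension at most $d$'' is expressible by a single first-order $L'$-sentence $\sigma_{\phi',d}$, obtained by universally quantifying over $d+1$ pairwise distinct $\bar{x}$-tuples and requiring, as a finite disjunction over subsets $S \subseteq \{1,\ldots,d+1\}$, that for at least one $S$ no $\bar{y}$ realises the $S$-pattern under $\phi'$. Since each $M_j$ ($j \in J$) satisfies $\sigma_{\phi',d}$, \L{}o\'s's theorem gives $N \models \sigma_{\phi',d}$, so $\phi'$ does not have the independence property in $N$. As $\phi'$ was arbitrary, $N$ has NIP theory.

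The main subtlety is the bookkeeping for the interpretation: distinctness in $M_i$ must translate to genuine distinctness of the representatives in order for the putative shattering in $M_i$ to lift. Replacing $M$ by $M^{\eq}$ and representing elements of $M_i$ by their equivalence classes makes this automatic, so the argument reduces to the two standard facts that NIP transfers through (uniform) interpretation and that a uniform VC-bound transfers through ultraproducts via \L{}o\'s.
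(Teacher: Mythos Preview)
Your proof is correct and follows exactly the approach the paper itself sketches: the paper's proof consists of the single sentence ``It suffices to observe that the VC-dimension of any $L'$-formula $\phi(\bar{x},\bar{y})$ is uniformly bounded across the class of structures $M_i$,'' with the details left as an exercise. You have supplied precisely those details---pulling $\phi'$ back through the interpretation to a formula in $M^{\eq}$, invoking NIP there to bound its VC-dimension, and then pushing the uniform bound through the ultraproduct via \L{}o\'s---so your argument is essentially the same as the intended one, just written out in full.
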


\begin{proof} It suffices to observe that the VC-dimension of any $L'$-formula $\phi(\bar{x},\bar{y})$ is uniformly bounded across the class of structures $M_i$. We leave the details as an exercise.
\end{proof}

\medskip

{\em Proof of Theorem~\ref{NIPnotrosy}.} Fix a prime $p$. It is well-known that the valued field ${\mathbb Q}_p$, and hence its valuation ring ${\mathbb Z}_p$, has NIP theory. Hence, the group $H:=\SL_2({\mathbb Z}_p)$, which is interpretable in ${\mathbb Z}_p$, also has NIP theory. Let $\mathcal{M}:=p{\mathbb Z}_p$, the maximal ideal of ${\mathbb Z}_p$. For each $k>0$ let $H_k$ be the congruence subgroup of $H$ consisting of matrices
$\begin{pmatrix}{1+a} & b \cr c & {1+d}\end{pmatrix}$
 which lie in $H$ and satisfy
 $a,b,c,d\in p^k\mathcal{M}$. Then $H_k$ is normal in $H$, and the quotient $\bar{H}_k:=H/H_k$ is finite.

Let $\mathcal{U}$ be a non-principal ultrafilter on $\omega$, and let $G$ be the ultraproduct
$\Pi \bar{H}_k/\mathcal{U}$. Then $G$ is a pseudofinite group, and is NIP by the previous lemma, since the groups 
$\bar{H}_k$ are uniformly interpretable in an NIP theory.

Note that if a group is soluble-by-finite, then so are all its subgroups and  quotients.
Therefore, in order to show that $G$ is not soluble-by-finite, we first prove the following
claim.

\smallskip

{\em Claim 1.} The group $G$ has a normal subgroup $N$ such that $G/N\cong \SL_2({\mathbb Z}_p)$.

{\em Proof of Claim.} We view the groups $\bar{H}_k$ and $G$ as structures  in the language
 $L^+:=L_g \cup \{P_i:i<\omega\}$ where the $P_i$  are unary predicates. In $\bar{H}_k$, $P_i$ is interpreted by $H_i/H_k$ for
 $i\leq k$ and by $1=H_k/H_k$ for $i>k$. Thus, the $P_i$  are interpreted by a descending chain of normal subgroups of 
$\bar{H}_k$. The
 group $G$ has by \L{}os's Theorem a corresponding strictly descending chain  $P_0^G>P_1^G>\ldots$
 consisting of normal subgroups of $G$. Put
$N:=\bigcap_{i\in\omega}P_i^G$. Compactness together with $\omega_1$-saturation of $G$ (viewed as an $L^+$-structure) yields that $G/N\cong \SL_2({\mathbb Z}_p)$.

\smallskip

To complete the proof of
Theorem~\ref{NIPnotrosy} we now note:

{\em Claim 2.} The group $\SL_2({\mathbb Z}_p)$ is not soluble-by-finite.

{\em Proof of Claim.} This must be well-known: if it were soluble-by-finite, then so
would be $\SL_2({\mathbb Z})<\SL_2({\mathbb Z}_p)$ and its quotient $PSL_2({\mathbb
Z})$, which is a free product of a cyclic group of order two and a cyclic group of order three,
and clearly not
soluble-by-finite (see \cite{Rob}, Section 6.2).
\hfill$\Box$

\begin{remark} \rm
Let $G$ be a pseudofinite NIP group which is not soluble-by-finite.  By Lemma~\ref{ultra} $G\equiv H$ for some ultraproduct $H=\Pi_{i\in {\mathbb N}} H_i/\mathcal{U}$, such that every infinite ultraproduct of the $H_i$ is elementarily equivalent to $G$. 

 The formula $\psi(x)$ defines a non-soluble normal subgroup $\psi(H)$ of finite index in $H$. By the methods of Section 2, it can be shown that   $\psi(H)$ has subgroups $N_1< N_2$ which are normal in $H$, such that $\psi(H)/N_2$ is pro-soluble (an inverse limit of soluble groups) but not soluble, and $N_1$ is the union of a chain of soluble groups but is not soluble. We have not investigated the possible structure of $N_2/N_1$. In fact, these conclusions can be shown to hold for {\em any} infinite NIP group which is a non-principal ultraproduct of distinct finite groups and is not soluble-by-finite.
\end{remark}

\section{Model theory of finite simple groups}

In this section we make some remarks about possible applications of model theory to finite group theory, via pseudofinite groups. As mentioned in the introduction, one generalisation of the notion of {\em stable} first order theory is that of {\em simple theory}. This notion was introduced by Shelah in \cite{shelah} and developed in the 1990s in \cite{kim} and \cite{kp} and further in other papers. Many ideas first appeared in \cite{ch} and in early versions of \cite{hrush2}. A convenient source, mainly used below, is \cite{wagner}. Simplicity theory is a context for an abstract theory of independence, given by `non-forking', which is less powerful than the corresponding independence theory in stability theory, but stronger than that in rosy theories. In stable theories, over a suitable base, the first order type of tuples $\bar{a}$ and $\bar{b}$, combined with the knowledge that they are independent, determines the type of $\bar{a}\bar{b}$, but this is false in general in simple theories. 

We emphasise the distinction between the group-theoretic notion of {\em simple group} and the model-theoretic notion of {\em group definable in a simple theory}. We also stress that our methods below only seem to have applications for families of finite simple groups {\em of fixed Lie rank}.

Among the simple theories are the {\em supersimple ones}, for which there are global model-theoretic notions of  {\em rank} or {\em dimension} for definable sets. We shall only deal with supersimple finite rank theories, in which all the main notions of model-theoretic rank  coincide on any definable set (though not on types). Below, we shall refer to $\SU$-rank, described later in more detail.

It can be shown  that any family of finite simple groups of fixed Lie rank is uniformly interpretable in  a family of finite fields, or (in the case of Suzuki and Ree groups) in a family of finite {\em difference fields}, that is, fields equipped with an automorphism. In fact, by  \cite[Ch. 5]{ryten}, if parameters are allowed then the groups are uniformly bi-interpretable with the (difference) fields. Thus, the groups ${\rm PSL}_3(q)$ are uniformly parameter bi-interpretable with the fields $\Ff_q$, the Ree and Suzuki groups ${}^2F_4(2^{2k+1})$ and ${}^2B_2(2^{2k+1})$ are uniformly parameter bi-interpretable with the difference fields $(\Ff_{2^{2k+1}}, x\mapsto x^{2^k})$, and the Ree groups ${}^2G_2(3^{2k+1})$ are uniformly parameter bi-interpretable with the difference fields $(\Ff_{3^{2k+1}}, x\mapsto x^{3^k})$. Now infinite ultraproducts of finite fields have supersimple SU-rank rank 1 theory -- that is, the set defined by the formula $x=x$ has $\SU$-rank 1 -- by for example \cite{cdm}. The ultraproducts of the corresponding difference fields also have supersimple $\SU$-rank 1 theory, by the results of Hrushovski \cite{hrushovski} and of Ryten (see e.g. \cite[Theorem 3.5.8]{ryten}). For the difference fields this rests on deep work from the 1990s in \cite{hrushovski}, and Hrushovski was clearly aware then of the supersimplicity of pseudofinite simple groups, and applications similar to some of those  below.

We mention three possible lines of application to finite simple groups. Some methods of this kind were used (though not for Ree and Suzuki groups), in the important paper \cite{hp1}.

 1. {\em Zilber Indecomposability.} The Irreducibility Theorem for linear algebraic groups was reworked by Zilber for groups of finite Morley rank. Other model-theoretic versions have appeared, but for us the following result of Wagner is convenient. See \cite[4.5.6]{wagner}, or, for the guise below, \cite[Remark 2.5]{er}.
 
 \begin{theorem}[Indecomposability Theorem] \label{zilberind}
 Let $G$ be a group interpretable in a supersimple finite $\SU$-rank theory, and let $\{X_i:i\in I\}$ be a collection of definable subsets of $G$. Then there exists a definable subgroup $H$ of $G$ such that:
 
 (i) $H\leq \langle X_i:i\in I\rangle$, and there are $n\in {\mathbb N}$, $\epsilon_1,\ldots,\epsilon_n\in \{-1,1\}$, and
 $i_1,\ldots,i_n \in I$, such that $H\leq X_{i_1}^{\epsilon_1}\ldots X_{i_n}^{\epsilon_n}$.
 
 (ii) $X_i/H$ is finite for each $i\in I$.
 
 If the collection of $X_i$ is setwise invariant under some group $\Sigma$ of definable automorphisms 
of $G$, then $H$ may be chosen to be $\Sigma$-invariant. 
 \end{theorem}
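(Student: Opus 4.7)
The plan is to adapt the classical Zilber indecomposability argument to the supersimple finite $\SU$-rank setting, with the stabilizer of a generic type of a definable set playing the role of a connected component.

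First, using that $\SU(G)$ is finite, I would pick a finite product $P = X_{i_1}^{\epsilon_1}\cdots X_{i_n}^{\epsilon_n}$ of maximal $\SU$-rank $d$; such a $P$ exists because every such product has rank at most $\SU(G)$. Fix a small parameter set $A$ over which $P$ and the family $\{X_i : i\in I\}$ are defined, and choose a complete type $q\in S(A)$ concentrated on $P$ with $\SU(q) = d$, a generic of $P$. The candidate subgroup is the left stabilizer $H = \mathrm{Stab}(q)$, the set of $g\in G$ for which the translate $g\cdot q$ equals $q$ in the non-forking sense. In a supersimple theory $H$ is type-definable, and in the finite $\SU$-rank case Wagner's stabilizer theorem (\cite[4.5.6]{wagner}) promotes $H$ to a definable subgroup of $G$.

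Clause (i) follows from a standard genericity identity: for two realisations $a,b\models q$ with $a\dnf_A b$, one has $ab^{-1}\in H$. Since $a,b\in P$, this gives $H\subseteq P\cdot P^{-1} = X_{i_1}^{\epsilon_1}\cdots X_{i_n}^{\epsilon_n}X_{i_n}^{-\epsilon_n}\cdots X_{i_1}^{-\epsilon_1}\subseteq\langle X_i : i\in I\rangle$. For clause (ii), fix $i\in I$ and $\epsilon\in\{\pm 1\}$. Then $X_i^{\epsilon}\cdot P$ is itself a finite product from the family, so by the maximality of $d$ we have $\SU(X_i^{\epsilon}\cdot P)\leq d$. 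Since $P$ is contained in finitely many left cosets of $H$ (by the stabilizer theorem), $\SU(H) = d$; the Lascar rank equality $\SU(X_i^{\epsilon}\cdot H) = \SU(X_i^{\epsilon}H/H) + \SU(H)$ then forces $\SU(X_i^{\epsilon} H/H) = 0$, so $X_i^{\epsilon}$ meets only finitely many left cosets of $H$.

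For the $\Sigma$-invariance, observe that $\Sigma$ permutes the family of finite products, hence permutes the maximal-rank products and their generic types. I would then replace $H$ by $\bigcap_{\sigma\in\Sigma} H^{\sigma}$, which is a $\Sigma$-invariant subgroup of $G$; by the chain condition on definable subgroups available in supersimple finite $\SU$-rank theories (an analogue of Baldwin--Saxl), this intersection is a finite intersection and hence still definable, and the inclusions of clauses (i) and (ii) carry over. The main obstacle throughout is the definability of the stabilizer: in simple theories, unlike stable ones, a type is not determined by its restrictions to small parameter sets, so the stable proof of the stabilizer theorem cannot be copied verbatim. The resolution works with Lascar strong types and exploits finiteness of $\SU$-rank directly; this is the technical heart of Wagner's argument and is precisely what \cite[4.5.6]{wagner} supplies.
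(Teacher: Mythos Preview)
The paper does not supply a proof of this theorem; it is quoted from \cite[4.5.6]{wagner} (and, in this form, \cite[Remark 2.5]{er}) as a known result, so there is no argument in the paper to compare against. Your outline is broadly in the spirit of Wagner's proof, and you correctly identify the definability of the stabiliser as the crux; but two steps in your sketch are not right as written.

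First, the implication from ``$ab^{-1}\in H$ for independent $a,b\models q$'' to ``$H\subseteq P\cdot P^{-1}$'' goes the wrong way: that identity only places \emph{generic} elements of $H$ inside $PP^{-1}$, not arbitrary ones. The repair is the usual one---every element of a group with a good theory of generics is a product of two generics---so one gets $H\subseteq (PP^{-1})(PP^{-1})$, which is still a finite product of the $X_i^{\pm 1}$ and suffices for clause~(i). Your rank computation for clause~(ii) then goes through once you use this longer product in place of $P$.

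Second, the ``analogue of Baldwin--Saxl'' you invoke for $\Sigma$-invariance does not exist in simple or supersimple theories: Baldwin--Saxl is specifically an NIP phenomenon, and supersimple groups can have infinite strictly descending chains of uniformly definable subgroups. Even pairwise commensurability of the $H^{\sigma}$ does not force the full intersection $\bigcap_{\sigma\in\Sigma}H^{\sigma}$ to be definable or to have the right rank (consider the finite-index subgroups $n\mathbb{Z}$ of $\mathbb{Z}$). What is actually used is that each $H^{\sigma}$, arising from the maximal-rank product $P^{\sigma}$, is commensurable with $H$, and one then passes to a $\Sigma$-invariant definable representative of the commensurability class using the machinery developed in \cite{wagner} for that purpose, rather than a naive intersection.
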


This has the following almost immediate application to finite simple groups. The result below can also be deduced from \cite[Theorem 1]{lns}, in combination with Theorem~\ref{asymp} below.

\begin{theorem}
Let $\C_\tau$ be a family of finite simple groups of fixed Lie type $\tau$, and let $\phi(x,y_1,\ldots,y_m)$ be a formula in the language of groups. Then there is a positive integer $d=d(\phi,\tau)$ with the following property: if  
$G\in \C_\tau$, $\bar{a}\in G^m$, and $X=\phi(G,\bar{a})$ satisfies $|X|>d$,
 then $G$ is a product of at most $d$ conjugates of the set
$X\cup X^{-1}$.
\end{theorem}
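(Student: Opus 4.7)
The plan is to argue by contradiction via an ultraproduct reduction, then apply Theorem~\ref{zilberind} (Zilber Indecomposability) to the family of conjugates of $X$. Suppose no uniform $d$ works. Then for each $n\in {\mathbb N}$ we may pick $G_n\in \C_\tau$ and $\bar a_n\in G_n^m$ with $|X_n|:=|\phi(G_n,\bar a_n)|>n$ such that $G_n$ is not a product of at most $n$ conjugates of $X_n\cup X_n^{-1}$. Fix a non-principal ultrafilter $\mathcal{U}$ on ${\mathbb N}$, let $G:=\prod_n G_n/\mathcal{U}$ with induced tuple $\bar a$, and set $X:=\phi(G,\bar a)$. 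Since $|X_n|>n$, \L{}os's Theorem gives $|X|\ge k$ for every $k$, so $X$ is infinite.

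By the uniform parameter bi-interpretability of the members of $\C_\tau$ with finite fields, or, in the Suzuki and Ree cases, finite difference fields, recalled at the start of this section, $G$ is interpretable in an infinite ultraproduct of such structures, which has supersimple $\SU$-rank $1$ theory by \cite{cdm} in the pure field case and by Hrushovski \cite{hrushovski} together with Ryten in the difference field case. Hence $\Th(G)$ is supersimple of finite rank. Moreover, by Wilson's classification combined with \cite{point}, $G$ is itself a group of Lie type $\tau$ over the pseudofinite (difference) field in question, and in particular is simple as an abstract group. Apply Theorem~\ref{zilberind} to the definable family $\{X^g:g\in G\}$, which is setwise stabilized by the group $\Sigma$ of inner automorphisms of $G$. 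This produces a $\Sigma$-invariant, hence normal, definable subgroup $H\le G$ with $X^g/H$ finite for every $g\in G$, and with $H$ contained in a product $X^{g_1\epsilon_1}\cdots X^{g_N\epsilon_N}$ of conjugates of $X$ and $X^{-1}$. If $H=1$ then every $X^g$, hence $X$ itself, is finite, contradicting $|X|=\infty$. So $H\ne 1$, and the simplicity of $G$ forces $H=G$; thus $G$ is a product of $N$ conjugates of $X\cup X^{-1}$.

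But ``$G$ is a product of at most $N$ conjugates of $X\cup X^{-1}$'' is first-order in the parameters $\bar a$: one existentially quantifies over $N$ conjugating elements $g_1,\ldots,g_N$ and $N$ factor elements $y_1,\ldots,y_N$ whose product is a prescribed $x$, with $y_i\in (X\cup X^{-1})^{g_i}$ expressed by $\phi(g_i^{-1}y_ig_i,\bar a)\vee \phi((g_i^{-1}y_ig_i)^{-1},\bar a)$. By \L{}os this sentence holds in $G_n$ for $\mathcal{U}$-almost all $n$, contradicting the choice of $G_n$ for any $n\ge N$. The only real obstacle is verifying the two model-theoretic inputs — supersimplicity of finite rank of $\Th(G)$ and abstract simplicity of $G$ — but both are exactly the bi-interpretability and classification facts from \cite{ryten}, \cite{hrushovski}, \cite{point} already imported at the start of Section~4. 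Once these are in hand, the argument is just an ultrafilter compactness wrapped around a single application of Zilber Indecomposability.
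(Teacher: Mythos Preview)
Your proof is correct and follows essentially the same route as the paper: reduce by ultraproduct to an infinite simple group of supersimple finite rank, apply Theorem~\ref{zilberind} to the conjugates of $X$ to obtain a normal definable $H$ contained in a bounded product of conjugates of $X\cup X^{-1}$, and finish by simplicity. Your ultrafilter bookkeeping is in fact slightly cleaner than the paper's (which builds a nested sequence of index sets to force failure for \emph{every} $d$ in the ultraproduct), and you make explicit the step ruling out $H=1$, which the paper leaves implicit.
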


\begin{proof} Suppose that this is false, and let $\C_\tau:=\{G_i:i\in{\mathbb N}\}$. Then there is a decreasing sequence of infinite subsets 
$(I_j:j\in {\mathbb N})$ of ${\mathbb N}$ with infinite intersection $I$ such that for any 
$d\in {\mathbb N}$, and for all but finitely many $j\in I_d$, $G_j$ is not a product of at most $d$ conjugates of $X_j \cup X_j^{-1}$. Choose a non-principal ultrafilter $\mathcal{U}$ on ${\mathbb N}$ which contains the set $I$. Let $G:=\Pi_{j\in {\mathbb N}} G_i/\mathcal{U}$ and 
$X:=\Pi_{j\in {\mathbb N}} G_i/\mathcal{U}$. Then $G$ is a simple pseudofinite group so has supersimple finite $\SU$-rank theory, and $X$ is  an infinite definable subset of $G$ such that for each $d\in {\mathbb N}$, $G$ is not a product of at most $d$ conjugates of $X\cup X^{-1}$.  By Theorem~\ref{zilberind} (including the final assertion), $G$ has an infinite definable normal subgroup $H$ which is contained in a product of a bounded number of conjugates of $X\cup X^{-1}$. This is a contradiction, since by simplicity of $G$, we have $H=G$. 
\end{proof}

\medskip

Other applications of Theorem~\ref{zilberind}
were found in \cite{lmt}. In particular, it was shown in Corollary 4.11 that certain maximal subgroups (those which are not `subfield subgroups') of finite simple groups are uniformly definable in the groups, and hence, if also unbounded in order, they are `uniformly maximal' \cite[Proposition 4.2(ii)]{lmt}.

\medskip

2. {\em Asymptotic classes.} The following definition is due to Elwes \cite{e}, extending the 1-dimensional case of \cite{macs}.

\begin{definition} \label{asympclass} \rm
A class $\C$ of finite first order structures is, for some positive integer $N$, an {\em $N$-dimensional asymptotic class}, if the following holds.

(i) For
every $L$-formula $\phi(\bar{x}, \bar{y})$ where $l(\bar{x})=n$ and $l(\bar{y})=m$,
there is a finite set of pairs $D \subseteq (\{0,\ldots,Nn\}
\times {\mathbb R}^{>0}) \cup \{(0,0)\}$ and for each $(d, \mu) \in
D$ a collection $\Phi_{(d,\mu)}$ of pairs of the form $(M,
\bar{a})$ where $M \in \C$ and $\bar{a} \in M^m$, so that $\{
\Phi_{(d, \mu)} : (d, \mu) \in D \}$ is a partition of 
$\{ (M, \bar{a}): M \in \C,  \bar{a} \in M^m \}$, and
$$\big||\phi(M^n, \bar{a})| - \mu|M|^{\frac{d}{N}}\big|
=o(|M|^{\frac{d}{N}})$$ as $|M| \rightarrow \infty$ and $(M,
\bar{a}) \in \Phi_{(d,\mu)}$.

(ii) Each $\Phi_{(d, \mu)}$ is $\emptyset$-definable, that is to
say $\{ \bar{a} \in M^m : (M, \bar{a}) \in \Phi_{(d, \mu)}\}$ is
uniformly $\emptyset$-definable across $\C$.
\end{definition} 

By the main theorem of \cite{cdm}, the class of finite fields is a 1-dimensional asymptotic class, and by Theorem 3.5.8 of \cite{ryten} the classes of difference fields $(\Ff_{2^{2k+1}}, x\mapsto x^{2^k})$ and
$(\Ff_{3^{2k+1}}), x\mapsto x^{3^k})$ also form 1-dimensional asymptotic classes. The bi-interpretability results of Ryten  mentioned above now yield the following.

\begin{theorem} \cite[Ryten]{ryten}\label{asymp} If $\C$ is a family of finite simple groups of fixed Lie type, then $\C$ is an $N$-dimensional asymptotic class for some $N$.
\end{theorem}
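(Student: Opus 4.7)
The plan is to transport the asymptotic class data from the (difference) field side to the group side via Ryten's uniform parameter bi-interpretation, which was already invoked in the paragraph preceding the statement. Fix a Lie type $\tau$ and let $\mathcal{C}=\{G_i:i\in\omega\}$ be the corresponding family. By \cite[Ch.~5]{ryten} there is a family $\mathcal{K}$ of finite fields (in the untwisted and classical cases) or of Frobenius difference fields of the form $(\Ff_{p^{2k+1}}, x\mapsto x^{p^k})$ (in the Suzuki and Ree cases), together with a fixed pair of interpretation schemes $\Gamma, \Delta$ and parameter tuples $\bar{z}_i\in K_i^\ell$, $\bar{w}_i\in G_i^{\ell'}$ witnessing a bi-interpretation of $G_i$ with the corresponding $K_i \in \mathcal{K}$, uniformly in $i$.

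First I would record that $\mathcal{K}$ itself is a 1-dimensional asymptotic class: the finite field case is the main theorem of \cite{cdm}, and the Frobenius difference field case is \cite[Theorem~3.5.8]{ryten}, resting on Hrushovski's \cite{hrushovski} analysis of pseudofinite fields with a Frobenius automorphism. Let $N$ be the algebraic dimension of the Chevalley group scheme of type $\tau$; arranging (by passing to a definable transversal of $\Gamma$'s equivalence relation if necessary) that $\Gamma$ realises $G_i$ as a definable subset of $K_i^k$, the 1-dimensional asymptotic class property applied to the formula picking out that subset gives $|G_i|\sim c_\tau|K_i|^N$ for a positive constant $c_\tau=c_\tau(\Gamma)$, and hence $|K_i|\sim c_\tau^{-1/N}|G_i|^{1/N}$ up to lower order terms.

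Given an arbitrary $L_g$-formula $\phi(\bar{x},\bar{y})$ with $l(\bar{x})=n$, I would pull it back along $\Gamma$ to a formula $\phi^*(\bar{u},\bar{v},\bar{z})$ over $\mathcal{K}$ and then apply the 1-dimensional asymptotic class property to $\phi^*$, treating $(\bar{v},\bar{z})$ as one compound parameter tuple. This yields a finite set of growth rates $\mu|K_i|^d$ (with $d\in\{0,\dots,Nn\}$) and a uniformly $\emptyset$-definable partition of the $\mathcal{K}$-side parameter tuples recording which rate applies. Substituting $|K_i|^d=(c_\tau^{-1/N}|G_i|^{1/N})^d$ converts these into rates $\mu'|G_i|^{d/N}$, producing an $N$-dimensional asymptotic class decomposition for $\mathcal{C}$ exactly in the sense of Definition~\ref{asympclass}; definability of the resulting cells $\Phi_{(d,\mu')}$ on the group side follows by pushing the $\mathcal{K}$-side partition back through $\Delta$, using that $\bar{w}_i$ is $\emptyset$-definably recoverable in $G_i$ up to a uniformly bounded choice.

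The main obstacle is not any substantive new group-theoretic input but the careful parameter bookkeeping: one must use that $\Gamma, \Delta$ genuinely form a single uniform scheme across all of $\mathcal{C}$, absorb the auxiliary tuples $\bar{z}_i,\bar{w}_i$ into the parameters of the target formula without destroying $\emptyset$-definability of the asymptotic cells, and check that the finite fibres of $\Gamma$'s associated equivalence relation contribute only a constant factor that can be swept into the leading coefficient $\mu$. Once this is done, and given the bi-interpretation data from \cite[Ch.~5]{ryten} together with the 1-dimensional asymptotic class results for finite fields and Frobenius difference fields, the remainder of the argument is a formal translation.
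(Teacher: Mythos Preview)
The paper does not give a proof of this theorem: it is stated with a citation to Ryten's thesis, and the only justification offered in the text is the single preceding sentence, ``The bi-interpretability results of Ryten mentioned above now yield the following.'' Your proposal is a correct and faithful unpacking of exactly that sentence---transporting the 1-dimensional asymptotic class structure from the finite fields (via \cite{cdm}) or Frobenius difference fields (via \cite[Theorem~3.5.8]{ryten} and \cite{hrushovski}) across Ryten's uniform parameter bi-interpretation, rescaling exponents through $|K_i|\sim c_\tau^{-1/N}|G_i|^{1/N}$, and recovering $\emptyset$-definability of the cells $\Phi_{(d,\mu')}$ on the group side by pushing the field-side partition back through the reverse interpretation $\Delta$. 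There is nothing substantive to compare: your argument \emph{is} the intended one, written out in full rather than delegated to the reference.
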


\begin{remark}\label{imbric} \rm Let $\C=\{G_i:i\in {\mathbb N}\}$ be an asymptotic class of finite simple groups as
 above, and let $G^*:=\Pi_{i\in {\mathbb N}}G_i/\mathcal{U}$ be an infinite ultraproduct of members
 of $\C$. Let $\phi(\bar{x},\bar{y})$ be a formula with $l(\bar{x})=m$ and
 $l(\bar{y})=n$, let $\bar{a}\in (G^*)^n$ with $\bar{a}=(\bar{a}_i)/\mathcal{U}$, and suppose
 that there is $U \in \mathcal{U}$ such that
for  all $i\in U$, $\phi(G_i^{m},\bar{a}_i)$ has size
 approximately 
$\mu|G_i|^d$ (in the sense of asymptotic classes). Then it follows
 that $\SU(\phi((G^*)^m,\bar{a}))=d.\SU(G^*)$. This can be deduced from 
\cite[5.4]{e}, since $\C$ is parameter-bi-interpretable with a 1-dimensional asymptotic class (of fields or difference fields).
\end{remark}

\medskip

3. {\em Word maps.} Let $w(x_1,\ldots,x_d)$ be a non-trivial group word in $x_1,\ldots,x_d$, that is, a non-identity element of the free group $F_d$ with free basis $\{x_1,\ldots,x_d\}$. Then $w$ defines, in any group $G$, a map $w:G^d \to G$, the {\em word map} corresponding to $w$, with image denoted by $w(G)$. It is shown in \cite{larsen} that there is a function $f$ such that if $G$ is a finite simple group, $w$ is a non-trivial word,  and $\epsilon>0$, then $|w(G)|\geq |G|^{1-\epsilon}$ for sufficiently large $G$. In fact (and this could also be deduced from the last statement using Theorem~\ref{asymp}), we have: if $\C$ is a family of finite simple groups of fixed Lie type, and $w$ is a non-trivial word, then  there is $\mu>0$ such that if $G\in\C$ is sufficiently large then $|w(G)|\geq \mu|G|$.

\begin{theorem}\cite{lmt}\label{shalevtheorem}
 For any non-trivial words $w_1,w_2$ there are $N=N(w_1,w_2)$ such that if $G$ is a finite simple group with $|G|\geq N$ then $w_1(G)w_2(G)=G$. 
\end{theorem}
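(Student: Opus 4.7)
The plan combines the Classification of Finite Simple Groups with the model-theoretic tools of the preceding subsections. Sporadic simple groups are finite in number and are absorbed into the constant $N$. For alternating groups of large degree and for families of classical simple groups of unbounded Lie rank, the methods here do not directly apply, and one must invoke the word-width results of Larsen and Shalev; I assume these. The remaining case is that of a family $\mathcal{C}_\tau = \{G_i : i \in \mathbb{N}\}$ of finite simple groups of fixed Lie type $\tau$ (including fixed Lie rank), and this is where the model-theoretic argument applies.

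Suppose for contradiction that $w_1(G_i) w_2(G_i) \neq G_i$ for $i$ in some infinite set $I \subseteq \mathbb{N}$. Choose a non-principal ultrafilter $\mathcal{U}$ with $I \in \mathcal{U}$ and form $G^* := \Pi_{i\in {\mathbb N}} G_i / \mathcal{U}$. By \cite{point} together with Wilson's classification, $G^*$ is an infinite simple group of Lie type $\tau$ over a pseudofinite (difference) field, and by Theorem~\ref{asymp} it has supersimple finite $\SU$-rank theory. Set $W_j := w_j(G^*)$; these are definable normal subsets of $G^*$ containing the identity. By the Larsen--Shalev bound $|w_j(G_i)| \geq \mu_j |G_i|$ for large $i$ and some $\mu_j > 0$ stated just before Theorem~\ref{shalevtheorem}, together with Remark~\ref{imbric}, each $W_j$ has $\SU$-rank equal to $\SU(G^*)$.

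The model-theoretic heart of the argument is to deduce $W_1 W_2 = G^*$. First, $G^*$ is definably connected: any proper definable finite-index subgroup has a normal core which is a proper definable finite-index normal subgroup, contradicting group-theoretic simplicity. In a definably connected group in a supersimple finite-rank theory there is an essentially unique generic type, invariant under left and right translation, and a normal definable set of full $\SU$-rank contains every realization of it. Given $g \in G^*$, passing to a sufficiently saturated elementary extension, pick $a$ generic over $g$; then $a \in W_1$, and by translation-invariance $a^{-1} g$ is generic over $g$, hence lies in $W_2$. Thus $g = a \cdot (a^{-1} g) \in W_1 W_2$. By \L{}os's theorem this descends to $w_1(G_i) w_2(G_i) = G_i$ for $\mathcal{U}$-almost every $i$, contradicting the choice of $I$.

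The main obstacle is justifying rigorously that the generic type of $G^*$ is essentially unique, translation-invariant, and contained in every normal definable set of full $\SU$-rank; this is standard for stable connected groups but requires adapting arguments to the supersimple, non-stable setting of simple pseudofinite groups of Lie type. A cleaner alternative is to apply the Indecomposability Theorem (Theorem~\ref{zilberind}) directly to the conjugation-invariant family $\{W_1, W_2\}$: by simplicity of $G^*$, the resulting definable $G^*$-invariant subgroup is all of $G^*$, yielding that $G^*$ is a \emph{bounded} product of $W_1, W_2$ and their inverses. This route gives only the weaker word-map conclusion the paper alludes to when noting that the present methods give ``weaker results'' on word maps; obtaining the precise two-factor identity $w_1(G) w_2(G) = G$ is the genuinely hard step.
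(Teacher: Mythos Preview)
The paper does not prove Theorem~\ref{shalevtheorem}: it is quoted as an external result (of Larsen--Shalev--Tiep), and the paper explicitly states that its own model-theoretic approach ``yields weaker statements'' --- namely Theorem~\ref{lst_theorem}, which gives $w_1(H)w_2(H)w_3(H)=H$ for three factors and only $|w_1(H)w_2(H)|/|H|\to 1$ for two. The Remark immediately following Theorem~\ref{lst_theorem} says outright: ``We do not know whether these model-theoretic methods can yield the stronger assertion that \ldots $w_1(G)w_2(G)=G$ for sufficiently large $G$.'' So you are attempting to prove, by the paper's methods, something the paper says it cannot do.

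The concrete gap in your argument is the claim that a definably connected supersimple finite-rank group has an ``essentially unique'' generic type contained in every normal definable set of full $\SU$-rank. This is true in stable theories but false in general in simple unstable theories: connectedness guarantees that the \emph{set} of generic types is closed under translation, not that it is a singleton. The condition $\SU(W_j)=\SU(G^*)$ from Lemma~\ref{rankgeneric} gives only that \emph{some} generic type $p_j$ contains $x\in W_j$, not that every generic element lies in $W_j$. So when you ``pick $a$ generic over $g$'', you have no right to conclude $a\in W_1$; and even if you choose $a\models p_1$, the translate $a^{-1}g$ is generic but may realise a generic type disjoint from $W_2$. This is precisely why Theorem~\ref{3gens} and the proof of Theorem~\ref{lst_theorem} need three generic types to hit an arbitrary element, and only two to hit a \emph{generic} element --- yielding exactly the three-factor surjectivity and the two-factor asymptotic density that the paper proves, and no more. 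You correctly identify the obstacle in your final paragraph, and your fallback via Theorem~\ref{zilberind} gives, as you note, only a bounded product; neither route closes the gap.
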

 
 This result is the culmination of  work in several other related papers. For example, it was shown by Shalev \cite{shalev} that if $w$ is a non-trivial word then there is $N=N(w)$ such that if $G$ is a non-abelian finite simple group with $|G|>N$ then $(w(G))^3=G$; and Theorem~\ref{shalevtheorem} was already proved for groups of fixed Lie type (other than the Ree and Suzuki groups) in \cite{ls}.

We  mention a possible alternative approach, which 
yields weaker statements than that of Theorem~\ref{shalevtheorem}, but has potential for further applications, since it depends just on the definability of $w(G)$ and its asymptotic size. For one such application, see Theorem~\ref{translatewords} below. The approach rests on the above-stated result of Larsen from \cite{larsen}, and some general model theory of groups in (super)simple theories. An advantage is  that Suzuki and Ree groups can be treated simultaneously with other families of finite simple groups with no extra work, though this rests on  the major work of Hrushovski in \cite{hrushovski}, in combination with \cite{ryten}.

First, for groups definable in simple theories there is a theory of generic types, analogous to that in stable theories,
developed by Pillay \cite{pill} and described in \cite[Sections 4.3--4.5]{wagner}. We shall consider a simple theory $T$, such that in any $M\models T$ there is an $\emptyset$-definable group $G$. Let $\dnf$ denote the relation of non-forking (i.e. independence) in simple theories: for subsets $A,B,C$ of $M$, $A\dnf_C B$ denotes that $A$ and $B$ are independent over $C$ in the sense of non-forking, that is, for any $\bar{a}$ from $A$, $\tp(\bar{a}/ B\cup C)$ does not fork over $C$. If $A$ is a set of parameters in $M\models T$, then $S_G(A)$ denotes the set of types over $A$ which contain the formula $x\in G$; that is the set of maximal consistent (with $T$) sets of formulas in the variable $x$, with parameters from $A$, which include the formula $x\in G$. Following \cite{wagner} (see Definition 4.3.2 and also Lemma 4.3.4) a type $p\in S_G(A)$ is {\em generic} if for any $b\in G$ and $a$ realising $p$ with $a\dnf_A b$, we have $ba\dnf A,b$. The group $G$ has a certain subgroup $G_A^o$ (the `connected component over $A$'), and a generic type is {\em principal} if it is realised in $G_A^o$ (where $G$ is interpreted in a sufficiently saturated model of $T$). Part (i) of the following result
 was first proved in \cite[Proposition 2.2]{psw}, and  (ii) is an immediate consequence. 

\begin{theorem}\label{3gens}
Let $T$ be a simple theory over a  countable language, $\bar{M}$ an $\omega_1$-saturated model of $T$ with a countable elementary substructure $M$, and $G$ an $\emptyset$-definable group in $\bar{M}$.
Let $p_1,p_2,p_3$ be three principal generic types of $G$ over  $M$. 

(i) There are $g_1,g_2\in \bar{M}$ such that $g_i\models p_i$ for $i=1,2$, $g_1 \dnf_M g_2$, and $g_1g_2\models p_3$.

(ii) If $r\in S_G(M)$ has realisations in $G_M^o$ then there are $a_i\in G$ with $a_i\models p_i$ (for $i=1,2,3$) such that $a_1a_2a_3\models r$. 
\end{theorem}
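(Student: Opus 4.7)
The plan is to prove (i) using the standard translation machinery for generic types in simple theories, and then deduce (ii) by a short translation trick together with the Independence Theorem over $M$. Throughout I would work inside the $\omega_1$-saturated $\bar{M}$, exploiting extension and symmetry of non-forking together with the translation invariance of principal generic types (see \cite[Sections 4.3--4.5]{wagner}): if $p\in S_G(M)$ is principal generic and $b\in\bar{M}$ is independent from some realisation of $p$, then the type of the product (in either order) is again principal generic.

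For (i) I would fix an arbitrary $h_2\models p_2$ in $\bar{M}$. For each $g\models p_1$ with $g\dnf_M h_2$, the translation principle guarantees that $\tp(gh_2/M)$ is again a principal generic type, so right multiplication by $h_2$ sends realisations of $p_1$ to principal generics. The key claim is that every principal generic is attained this way, and in particular $p_3$ is; this is where transitivity of the $G_M^o(\bar{M})$-action on principal generic types enters. To produce $g_1\models p_1$ with $g_1 h_2\models p_3$ and $g_1\dnf_M h_2$, I would use the Independence Theorem over the model $M$ to amalgamate the constraint $\tp(g_1/M)=p_1$ with the constraint $\tp(g_1 h_2/M)=p_3$, the compatibility of these following from the fact that any two principal generics over $M$ lie in a single $G_M^o(\bar{M})$-orbit under left multiplication. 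Setting $g_2:=h_2$ then completes part (i).

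For (ii) I would fix $a_3\models p_3$ in $\bar{M}$ and, by extension, pick $c\models r$ with $c\in G_M^o(\bar{M})$ and $c\dnf_M a_3$. Let $q:=\tp(ca_3^{-1}/M)$. Since $p_3^{-1}$ is principal generic (inversion being a definable isomorphism of $G$) and $c\dnf_M a_3^{-1}$, the translation principle gives that $q$ is principal generic. Apply (i) to the triple $(p_1,p_2,q)$ to obtain $a_1\models p_1$ and $a_2\models p_2$ with $a_1\dnf_M a_2$ and $a_1 a_2\models q$. A second appeal to the Independence Theorem over $M$ allows me to amalgamate $(a_1,a_2)$ with $a_3$ so that $\tp(a_1 a_2, a_3/M)=\tp(ca_3^{-1},a_3/M)$; computing the product then forces $a_1 a_2 a_3\models\tp(c/M)=r$.

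The main obstacle is the transitivity step in (i): showing that left (equivalently right) multiplication by $G_M^o(\bar{M})$ acts transitively on the set of principal generic types over $M$, and then turning this group-theoretic fact into an actual amalgamation of three realisations with prescribed types. This relies on the full strength of the Independence Theorem over a model, combined with the characterisation of principal generics via the connected component. Granted (i), part (ii) is then an essentially formal consequence, which is consistent with the author's remark that (ii) is immediate.
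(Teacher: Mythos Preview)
Your proposal is essentially correct and follows the same route as the paper, but you overcomplicate part (ii).

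For (i) the paper simply cites \cite[Proposition 4.5.6]{wagner} (originating in \cite{psw}); your sketch via translation-invariance of principal generics together with the Independence Theorem over the model $M$ is exactly the standard argument behind that citation, so nothing is amiss there.

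For (ii) you and the paper make the same reduction: fix $a_3\models p_3$ and a realisation of $r$ (your $c$, the paper's $b$) independent from $a_3$ over $M$, set $q:=\tp(ca_3^{-1}/M)$, and verify that $q$ is principal generic via the translation lemma. Both then apply (i) to the triple $(p_1,p_2,q)$. The difference is in the last step. You invoke a ``second appeal to the Independence Theorem'' to force $\tp(a_1a_2,a_3/M)=\tp(ca_3^{-1},a_3/M)$. This is not needed, and indeed is not really an Independence Theorem argument: once (i) produces $a_1,a_2$ with $a_1a_2\models q=\tp(ca_3^{-1}/M)$, you may, by $\omega_1$-saturation of $\bar M$ (realising the type of $(a_1,a_2)$ over the countable set $M\cup\{ca_3^{-1}\}$, pulled back via an automorphism sending $a_1a_2$ to $ca_3^{-1}$), take $a_1a_2$ to be literally the element $ca_3^{-1}$. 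Then $a_1a_2a_3=c\models r$ with the \emph{original} $a_3$, and you are done. This is precisely what the paper does: it keeps $a_3$ and $b$ fixed throughout and obtains $a_1a_2=ba_3^{-1}$ directly, so that $a_1a_2a_3=b$ on the nose. Your amalgamation step achieves the same end but via an unnecessary detour; the tool you actually use there is saturation/homogeneity, not the Independence Theorem.
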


\begin{proof}
(i) See for example \cite[Proposition 4.5.6]{wagner}, though as phrased above one must use $\omega_1$-saturation  to find the $g_i$ in $\bar{M}$.

(ii) Choose $a_3,b\in \bar{M}$ such that $a_3\models p_3$, $b\models r$, and 
$a_3\dnf_M b$, and put $c_3:=ba_3^{-1}$. Let $p_3':=\tp(c_3/M)$. Then $p_3'$ is a generic type of $G^*$ over $M$. Indeed (repeatedly using 4.3.2 and 4.3.4 of \cite{wagner}), $\tp(a_3^{-1}/M)$ is generic, so as
$a_3^{-1}\dnf_M b$, we find $\tp(a_3^{-1}/Mb)$ is generic, so $\tp(ba_3^{-1}/Gb)$ is generic.
As $\tp(a_3^{-1}/M)$ is generic and $a_3^{-1}\dnf_M b$ we also get  $ba_3^{-1} \dnf M, b$, so $ba_3^{-1}\dnf_M b$, and this forces that $p_3'=\tp(ba_3^{-1}/M)$ is generic. Also, by the assumptions on $p_3$ and $r$, $p_3'$ has  realisations in $G^o$ so is principal.

It follows by (i) that there are $a_1,a_2\in \bar{M}$ such that $a_1\models p_1$, $a_2\models p_2$, and 
$a_1a_2=c_3$. Hence $a_1a_2a_3=b\models r$.
\end{proof}

\medskip

We also observe the following, which can be found for example in  \cite{wagner}. The $\SU$-rank on types is an ordinal-valued rank defined by transfinite induction:  for any type $p$ over $A$, $\SU(p)\geq \alpha+1$ if there is $B\supset A$ such that $p$ has a forking extension $q$ over $B$ with $\SU(q)\geq \alpha$, and for limit ordinals $\delta$, $\SU(p)\geq \delta$ if $\SU(p)\geq \beta$ for all ordinals $\beta<\delta$. If $X$ is a set defined by a formula $\phi(x,\bar{a})$ with $\bar{a}$ from $A$,  then $\SU(X)$ is the supremum (which will be  the maximum in the  finite rank theories considered here) of the $\SU(p)$ for types $p$ over $A$ containing the formula $\phi(x,\bar{a})$.

\begin{lemma}\label{rankgeneric}
Let $G$ be a group definable in a finite $\SU$-rank supersimple theory, and let $A$ be a parameter set. Then

(i) If $p\in S_G(A)$ then $p$ is generic if and only if $\SU(p)=\SU(G)$.

(ii) If $X$ is an $A$-definable subset of $G$, then $\SU(G)=\SU(X)$ if and only if some generic type $p\in S_G(A)$ contains a formula defining $X$. 
\end{lemma}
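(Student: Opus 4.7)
The plan is to derive both parts from the general theory of generic types and $\SU$-rank for groups definable in supersimple finite rank theories, as set out in \cite[\S 4.3, \S 4.5]{wagner}. I will use three basic ingredients: (a) monotonicity of $\SU$-rank, so that $a\dnf_A B$ is equivalent to $\SU(a/A)=\SU(a/AB)$; (b) in the finite rank setting, for any $A$-definable set $Y$ the maximum $\SU(Y)=\max\{\SU(p):p\in S_Y(A)\}$ is attained; and (c) if $a,b\in G$ with $a\dnf_A b$, then $a$ and $ba$ are interdefinable over $Ab$, so $\SU(ba/Ab)=\SU(a/A)$.

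The reverse implication of (i) is a short rank computation. Assuming $\SU(p)=\SU(G)$, take $a\models p$ and $b\in G$ with $a\dnf_A b$; by (c) and monotonicity,
$$\SU(G)=\SU(a/A)=\SU(ba/Ab)\leq\SU(ba)\leq\SU(G),$$
so equality holds throughout, giving $\SU(ba)=\SU(ba/Ab)$, which is exactly $ba\dnf A,b$.

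For the forward direction, first pick by (b) some $q\in S_G(A)$ with $\SU(q)=\SU(G)$; this $q$ is generic by the reverse implication just proved. The remaining content is that any two generic types over $A$ have the same $\SU$-rank, which I would handle by a translation argument: take $a_1\models p$ and $a_2\models q$ with $a_1\dnf_A a_2$, and set $b:=a_2 a_1^{-1}$. Since $b$ and $a_2$ are interdefinable over $Aa_1$, one has $\SU(b/Aa_1)=\SU(a_2/Aa_1)=\SU(a_2/A)=\SU(G)$, which forces $\SU(b/A)=\SU(b/Aa_1)$, i.e., $b\dnf_A a_1$, hence $a_1\dnf_A b$ by symmetry. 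Genericity of $p$ applied to the pair $(a_1,b)$ now yields $ba_1\dnf A,b$, so using (c), $\SU(a_2)=\SU(ba_1)=\SU(ba_1/Ab)=\SU(a_1/A)=\SU(p)$. Since $\SU(a_2)\geq\SU(a_2/A)=\SU(G)$, this gives $\SU(p)\geq\SU(G)$; the reverse inequality is automatic.

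Part (ii) is then a formal corollary of (i). Writing $X=\phi(G,\bar{a})$ with $\bar{a}$ from $A$: if $\SU(X)=\SU(G)$, then by (b) some $p\in S_G(A)$ containing $\phi(x,\bar{a})$ attains $\SU(p)=\SU(X)=\SU(G)$ and is generic by (i); conversely, if a generic $p\in S_G(A)$ contains a formula defining $X$, then $\SU(G)=\SU(p)\leq\SU(X)\leq\SU(G)$, and the squeeze gives $\SU(X)=\SU(G)$. The only step doing genuine work is the forward direction of (i), where the translation trick and the independence $b\dnf_A a_1$ have to be verified; everything else is routine manipulation of non-forking and rank, and the authors' own phrasing (``can be found for example in \cite{wagner}'') confirms that the whole argument is essentially a reference.
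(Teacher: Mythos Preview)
Your argument is correct and is precisely the unpacking of the citation the paper gives: the paper's own proof of (i) is simply ``See \cite[p.~168]{wagner}'', and for (ii) it says ``Immediate from (i) and the definition of $\SU$-rank for types and formulas'', which is exactly your squeeze argument. You have written out the standard translation-and-rank computation that underlies the reference, so the approaches coincide, with yours being the more explicit version.
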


\begin{proof} (i) See \cite[p. 168]{wagner}. 

(ii) Immediate from (i) and the definition of $\SU$-rank for types and formulas.\end{proof}

\begin{theorem} \label{lst_theorem}
Let $\C_\tau$ be a family of finite simple groups of fixed Lie type $\tau$, and let 
$w_i(x_1,\ldots,x_{d_i})$ be  non-trivial words, for $i=1,2,3$.  

(i) There  is $N=N(\tau,w_1,w_2,w_3)\in {\mathbb N}$ such that if $H\in \C_\tau$ with $|H|>N$ then 
$w_1(H)w_2(H)w_3(H)=H$.

(ii) $|H\setminus w_1(H)w_2(H)|=o(|H|)$ for sufficiently large $H\in\C_\tau$.

(iii) $|w_1(H)w_2(H)|/|H|\to 1$ as $|H|\to \infty$, for $H\in \C_\tau$.
\end{theorem}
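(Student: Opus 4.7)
The plan is to prove all three parts by the same ultraproduct-plus-generics technique: reduce a hypothetical failure in $\C_\tau$ to a pseudofinite simple group $G$ of Lie type $\tau$ whose $w_i(G)$ are definable sets of full $\SU$-rank, and then contradict this using Theorem~\ref{3gens}. List $\C_\tau = \{H_k : k \in {\mathbb N}\}$, assume some failure of (i), (ii) or (iii), pick a non-principal ultrafilter $\mathcal{U}$ on ${\mathbb N}$ concentrating on the failure set, and put $G := \Pi_k H_k/\mathcal{U}$. By Wilson's classification and the bi-interpretability results discussed at the start of this section, $G$ is an infinite simple group with supersimple finite $\SU$-rank theory, and is $\omega_1$-saturated. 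Two preliminaries drive the proof. First, abstract simplicity of $G$ together with the standard core-of-index argument shows that every $M$-definable subgroup of finite index in $G$ equals $G$, so $G^o_M = G$ and every generic type in $S_G(M)$ is principal. Second, each $w_i(G)$ is $\emptyset$-definable, and Larsen's lower bound combined with Theorem~\ref{asymp} and Remark~\ref{imbric} yields $\SU(w_i(G)) = \SU(G)$, so by Lemma~\ref{rankgeneric}(ii) there is a principal generic $p_i \in S_G(M)$ concentrating on $w_i(G)$, for a suitable countable $M \preceq G$.

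For (i), assume $w_1(H_k) w_2(H_k) w_3(H_k) \neq H_k$ on a $\mathcal{U}$-large set. By \L{}os there is $h \in G \setminus w_1(G) w_2(G) w_3(G)$; enlarging $M$ to contain $h$ and setting $r := \tp(h/M)$, the type $r$ is realised in $G^o_M = G$, so Theorem~\ref{3gens}(ii) applied to $p_1, p_2, p_3, r$ yields $a_i \models p_i$ with $a_1 a_2 a_3 \models r$. But $a_i \in w_i(G)$ forces $h = a_1 a_2 a_3 \in w_1(G) w_2(G) w_3(G)$, a contradiction.

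For (ii), if (ii) fails then there is $\epsilon > 0$ such that $|H_k \setminus w_1(H_k) w_2(H_k)| \geq \epsilon |H_k|$ on a $\mathcal{U}$-large set. Since the defining formula has a single free variable, the asymptotic class structure of $\C_\tau$ (Theorem~\ref{asymp}) forces its dimension to be full, so Remark~\ref{imbric} gives $\SU(G \setminus w_1(G) w_2(G)) = \SU(G)$; by Lemma~\ref{rankgeneric}(ii) there is a principal generic $q \in S_G(M)$ concentrating on $G \setminus w_1(G) w_2(G)$. Now Theorem~\ref{3gens}(i) applied to $p_1, p_2, q$ supplies $g_1 \models p_1$, $g_2 \models p_2$ with $g_1 g_2 \models q$; but then $g_1 g_2 \in w_1(G) w_2(G)$ while simultaneously $g_1 g_2 \models q$, contradicting $q$. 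Part (iii) is immediate from (ii), since $|w_1(H) w_2(H)|/|H| = 1 - |H \setminus w_1(H) w_2(H)|/|H| \to 1$.

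The main obstacle, and where the deepest external machinery enters, is the translation between asymptotic density in the finite $H_k$ and $\SU$-rank in the ultraproduct $G$: this is precisely what Theorem~\ref{asymp} together with Remark~\ref{imbric} provide, but for the Suzuki and Ree families it rests on Hrushovski's work on pseudofinite difference fields. The remaining ingredients — Larsen's lower bound, $G^o_M = G$ from abstract simplicity, and the two applications of Theorem~\ref{3gens} — are routine once that translation is in hand.
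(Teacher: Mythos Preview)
Your proposal is correct and follows essentially the same route as the paper: pass to an $\omega_1$-saturated ultraproduct $G$, use Larsen's bound together with Theorem~\ref{asymp} and Remark~\ref{imbric} to see that each $w_i(G)$ (and, for (ii), the complement $G\setminus w_1(G)w_2(G)$) has full $\SU$-rank and hence supports a principal generic type, then apply Theorem~\ref{3gens}(ii) for (i) and Theorem~\ref{3gens}(i) for (ii). One small slip: in part (i) you write $h=a_1a_2a_3$, but Theorem~\ref{3gens}(ii) only gives $a_1a_2a_3\models r=\tp(h/M)$, not equality with $h$; the contradiction still goes through because the $\emptyset$-definable formula $x\notin w_1(G)w_2(G)w_3(G)$ lies in $r$.
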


\begin{proof}
(i) Suppose that (i) is false. Then there is an infinite ultraproduct $G^*$ of members of $\C_\tau$ such that
$w_1(G^*)w_2(G^*)w_3(G^*)$ is a proper subset of $G^*$.
Also,  $G^*$ is $\omega_1$-saturated, and has a countable elementary substructure $G$. By the result of Larsen \cite{larsen} mentioned above,  there is $\mu>0$ such that if $H\in\C_\tau$ is sufficiently large then $|w_i(H)|\geq \mu|H|$ for $i=1,2,3$. 
It follows from Remark~\ref{imbric} that  $SU(w_i(G^*))=SU(G^*)$ for each $i$. Hence, by Lemma~\ref{rankgeneric}, there is for each $i=1,2,3$ a  generic type $p_i$ of $G^*$ over $G$ containing the formula
$x\in w_i(G^*)$. As  all models of $T:=\Th(G)$ are simple, a very saturated model of $T$ cannot have a proper subgroup of bounded index, so $G^*=(G^*)_M^o$ and all generic types of $G^*$ are principal.

Let $r$ be any type over $G$ realised in $G^*\setminus w_1(G^*)w_2(G^*)w_3(G^*)$. Then by Theorem~\ref{3gens}(ii), there are $a_1,a_2,a_3,b\in G^*$ such that $a_i\models p_i$ and $b\models r$ and $a_1a_2a_3=b$. In particular, $a_i\in w_i(G^*)$, so $b\in w_1(G^*)w_2(G^*)w_3(G^*)$, which is a contradiction.

(ii) Again, suppose this is false. Then by Theorem~\ref{asymp} there is $\nu>0$ and infinitely many groups $H\in \C_\tau$ such that
$|H\setminus w_1(H)w_2(H)|>\nu|H|$.
 Then, by Remark~\ref{imbric}, we may choose an infinite ultraproduct $G^*$ of members of $\C_\tau$ such that $\SU(G^*\setminus w_1(G^*)w_2(G^*))=\SU(G^*)$. Again let $G$ be a countable elementary substructure of $G^*$. By Lemma~\ref{rankgeneric} for $i=1,2,3$ there are generic types $p_i$ of $G^*$ over $G$ such that $p_1$ contains the formula $x\in w_1(G^*)$, $p_2$ contains the formula $x\in w_2(G^*)$,
and $p_3$ contains the formula $x\in G^*\setminus w_1(G^*)w_2(G^*)$. 
By $\omega_1$-saturation and Theorem~\ref{3gens}(i) there are $a_1,a_2\in G^*$ such that $a_1\models p_1$,
$a_2\models p_2$, and $a_3:=a_1a_2\models p_3$. In particular, $a_i\in w_i(G^*)$ for $i=1,2$ so 
$a_3\in w_1(G^*)w_2(G^*)$, which is a contradiction.

(iii) This is immediate from (ii).
\end{proof}

\medskip

\begin{remark} \rm
1. Part (i) above is of course just a weakening of a special case of Theorem~\ref{shalevtheorem}. Part (iii) was proved in \cite{shalev2}. We do not know whether these model-theoretic methods can yield the stronger 
assertion that if $\C$ is a family of finite simple groups of fixed Lie rank and $w_1,w_2$ are non-trivial words, then $w_1(G)w_2(G)=G$ for sufficiently large $G\in \C$. 

2. It should be  possible to strengthen the asymptotic statements in (ii), (iii), by working with tighter error terms in the definition of `asymptotic class', in the manner of \cite{cdm} rather than with the $o$-notation. More precisely, Theorem~\ref{asymp} should still hold if, 
in Definition~\ref{asympclass}, the condition
$$\big||\phi(M^n, \bar{a})| - \mu|M|^{\frac{d}{N}}\big|
=o(|M|^{\frac{d}{N}})$$
is replaced by, for some constant $c$,
$$\big||\phi(M^n, \bar{a})| - \mu|M|^{\frac{d}{N}}\big| \leq c|M|^{\frac{d}{N}-\frac{1}{2}}.$$
We have not checked this.

3. If $w(x_1,\ldots,x_d)$ is a non-trivial word, and $\C$ is a class of finite simple groups of fixed Lie type, then $w$ defines the word map $w:G^d\to G$ for $G\in \C$. Theorem~\ref{asymp} is applicable in the class $\C$ to the formula $\phi(x_1,\ldots,x_d,y)$ which says
$w(x_1,\ldots,x_d)=y$ and hence yields information on the distribution of the
solution sets, that is, on the sizes of the fibres.
\end{remark}

Finally, we stress that the proof of Theorem~\ref{lst_theorem} depends just on the fact that the sets $w_i(H)$ (for $H\in \C_t$) are uniformly definable and have cardinality
a positive proportion of $H$. This gives the possibility of further applications. For example, translates $hw(H)$ of sets $w(H)$ have the same properties. Thus, the same proof yields the following, with an analogue also of Theorem~\ref{lst_theorem}(ii), (iii). (The definition of the sets $hw(H)$ requires a parameter, but this causes no problems as, in the proof, the countable elementary submodel $G$ of $G^*$ can be assumed to include any required parameters.)

\begin{theorem} \label{translatewords}
Let $\C_\tau$ be a family of finite simple groups of fixed Lie type $\tau$, and let $w_1,w_2,w_3$ be non-trivial words. 
Then there is $N=N(w_1,w_2,w_3,\tau)$ such that if $H\in \C_\tau$ and $|H|>N$ and $h_1,h_2\in H$, then 
$$w_1(H)h_1w_2(H)h_2w_3(H)=H.$$
\end{theorem}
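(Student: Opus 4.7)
My plan is to mimic the proof of Theorem~\ref{lst_theorem}(i) almost verbatim, exploiting the observation---already flagged in the paragraph preceding the statement---that for any $h\in H$ the left translate $hw(H)$ of a word image is itself a uniformly definable subset of $H$ (with $h$ as a parameter) of exactly the same cardinality as $w(H)$, and hence occupies the same asymptotic proportion of $|H|$.

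Suppose for contradiction that the theorem fails. Then I would extract a sequence $(H_j, h_{1,j}, h_{2,j}, b_j)_{j\in {\mathbb N}}$ with $H_j\in \C_\tau$, $|H_j|\to\infty$, and $b_j\in H_j\setminus w_1(H_j)h_{1,j}w_2(H_j)h_{2,j}w_3(H_j)$ for each $j$. Choose a non-principal ultrafilter $\mathcal{U}$ on ${\mathbb N}$, set $G^*:=\Pi_{j\in {\mathbb N}} H_j/\mathcal{U}$, and let $h_1, h_2, b$ denote the ultraproduct classes of $(h_{1,j})$, $(h_{2,j})$ and $(b_j)$. Then $G^*$ is $\omega_1$-saturated, is abstractly simple, and has supersimple finite $\SU$-rank theory. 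Choose a countable elementary substructure $G\preceq G^*$ containing $h_1, h_2$, and put
\[
X_1:=w_1(G^*),\qquad X_2:=h_1w_2(G^*),\qquad X_3:=h_2w_3(G^*).
\]
Each $X_i$ is $G$-definable, and left-multiplication by $h_1$ or $h_2$ is a definable bijection, so by Larsen's theorem each $X_i$ has cardinality at least $\mu|G^*|$ in the asymptotic-class sense. Remark~\ref{imbric} therefore yields $\SU(X_i)=\SU(G^*)$ for $i=1,2,3$.

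Next I would apply Lemma~\ref{rankgeneric}(ii) to obtain for each $i\in\{1,2,3\}$ a generic type $p_i\in S_{G^*}(G)$ containing the formula $x\in X_i$. Since $G^*$ is simple as an abstract group and is $\omega_1$-saturated, it has no proper subgroup of bounded index, whence $(G^*)^o_G=G^*$ and every generic type over $G$ is principal. Set $r:=\tp(b/G)$; since $b\in G^*=(G^*)^o_G$, $r$ is realised in the connected component. Theorem~\ref{3gens}(ii) then supplies $a_1,a_2,a_3\in G^*$ with $a_i\models p_i$ and $a_1a_2a_3\models r$. In particular $a_1a_2a_3\in w_1(G^*)h_1w_2(G^*)h_2w_3(G^*)$, contradicting the fact, supplied by \L{}os's theorem applied to the $b_j$, that $b\notin w_1(G^*)h_1w_2(G^*)h_2w_3(G^*)$.

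I expect no serious obstacle: the argument is essentially a bookkeeping upgrade of Theorem~\ref{lst_theorem}(i), since the only genuinely new ingredient---that left translates of $w_i(G^*)$ by fixed elements preserve $\SU$-rank---is immediate from the fact that left multiplication is a definable bijection. The one small subtlety is ensuring that the translating elements $h_1,h_2$ lie in the base model $G$ over which the generic types are defined, which is arranged simply by choosing $G$ to contain them from the outset; this is exactly the point acknowledged in the parenthetical remark immediately before the theorem statement.
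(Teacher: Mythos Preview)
Your proposal is correct and is exactly the argument the paper has in mind: it says explicitly that ``the same proof yields'' Theorem~\ref{translatewords}, noting only that translates $hw(H)$ remain uniformly definable sets of the same asymptotic size and that the countable elementary substructure $G$ must be chosen to contain the parameters $h_1,h_2$. You have faithfully carried out precisely this modification of the proof of Theorem~\ref{lst_theorem}(i), including the parenthetical point about parameters.
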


\bibliographystyle{amsplain}

\end{document}